\documentstyle[amssymb,amsfonts,12pt]{amsart}

\newtheorem{theorem}{Theorem}[section]
\newtheorem{lemma}[theorem]{Lemma}
\newtheorem{corollary}[theorem]{Corollary}
\newtheorem{proposition}[theorem]{Proposition}
\newtheorem{definition}[theorem]{Definition}

\theoremstyle{remark}

\newtheorem*{ack*}{Acknowledgment}

\textwidth16cm
\topmargin0cm
\oddsidemargin0cm
\evensidemargin0cm
\textheight22.5cm

\def\x{{\bf x}}

\def\R{{\mathbb R}}
\def\E{{\mathcal H}}
\def\N{{\mathbb N}}
\def\C{{\mathbb C}}

\def\P{{\mathcal P}}
\def\B{{\mathbb E}}

\def\A{{\mathcal N}}
\def\dist{{\operatorname{dist}}}
\def\supp{{\operatorname{supp}}}
\def\bas{\begin{align*}}
\def\eas{\end{align*}}
\def\bi{\begin{itemize}}
\def\ei{\end{itemize}}
\newenvironment{proof}{\noindent {\bf Proof} }{\endprf\par}
\def \endprf{\hfill  {\vrule height6pt width6pt depth0pt}\medskip}
\def\1{{\bf 1}}

\begin{document}

\title[$l^p$ decouplings for hypersurfaces with nonzero Gaussian curvature ]{$l^p$ decouplings for hypersurfaces with nonzero Gaussian curvature}
\author{Jean Bourgain}
\address{School of Mathematics, Institute for Advanced Study, Princeton, NJ 08540}
\email{bourgain@@math.ias.edu}
\author{Ciprian Demeter}
\address{Department of Mathematics, Indiana University, 831 East 3rd St., Bloomington IN 47405}
\email{demeterc@@indiana.edu}

\keywords{decouplings, Gaussian curvature}
\thanks{The first author is partially supported by the NSF grant DMS-1301619. The second  author is partially supported  by the NSF Grant DMS-1161752}
\begin{abstract}
We prove an $l^p$ decoupling inequality for hypersurfaces with nonzero Gaussian curvature and use it to derive a corresponding $l^p$ decoupling for curves not contained in a hyperplane. This extends our earlier work from \cite{BD3}.
\end{abstract}
\maketitle


\section{Statements of results}

Let $S$ be a compact $C^2$ hypersurface in $\R^n$  with  nonzero Gaussian curvature.  The typical example to have in mind is the truncated  paraboloid defined for $\upsilon=(\upsilon_1,\ldots,\upsilon_{n-1})\in(\R\setminus\{0\})^{n-1}$ as
$$H^{n-1}_\upsilon:=\{(\xi_1,\ldots,\xi_{n-1},\upsilon_1\xi_1^2+\ldots+\upsilon_{n-1}\xi_{n-1}^2):\;|\xi_i|\le 1/2\}.$$
 $H^{n-1}_\upsilon$ is called elliptic when all $\upsilon_i$ have the same sign and  hyperbolic otherwise.

Let $\A_\delta$ be the $\delta$ neighborhood of $S$ and let $\P_\delta$ be a finitely overlapping cover of  $\A_\delta$ with
$\sim\delta^{1/2}\times \ldots\delta^{1/2}\times \delta$ rectangular boxes $\theta$. We will denote by $f_\theta$ the Fourier restriction of $f$ to $\theta$.

We will write $A\sim B$ if $A\lesssim B$ and $B\lesssim A$. The implicit constants hidden inside the symbols  $\lesssim$ and $\sim$  will in general  depend on  fixed parameters such as $p$, $n$, $\alpha$ and sometimes on  variable parameters such as  $\epsilon$. We will in general not record the dependence on the fixed parameters.

Our main result is the following {\em $l^p$ Decoupling Theorem}.
\begin{theorem}
\label{t1}

Let $n\ge 2$.  If $\supp(\hat{f})\subset \A_\delta$
then for $p\ge\frac{2(n+1)}{n-1}$ and $\epsilon>0$
\begin{equation}
\label{e6}
\|f\|_p\lesssim_\epsilon \delta^{\frac{n}p-\frac{n-1}{2}-\epsilon}(\sum_{\theta\in \P_\delta}\|f_\theta\|_p^p)^{1/p}.
\end{equation}
\end{theorem}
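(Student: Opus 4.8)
The plan is to deduce Theorem~\ref{t1} from the $l^2$ decoupling theorem for hypersurfaces with nonvanishing Gaussian curvature, i.e. the estimate $\|f\|_p\lesssim_\epsilon\delta^{-\epsilon}(\sum_{\theta\in\P_\delta}\|f_\theta\|_p^2)^{1/2}$ valid, for Schwartz $f$ with $\supp\hat f\subset\A_\delta$, in the range $2\le p\le p_c:=\frac{2(n+1)}{n-1}$, together with two soft ingredients: the comparison of the $\ell^2$ and $\ell^p$ norms of a sequence indexed by the $N:=\#\P_\delta\sim\delta^{-(n-1)/2}$ caps, and the trivial estimate at $L^\infty$. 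Since the cited $l^2$ result already holds in the present generality, no reduction to the paraboloid is needed, and it suffices to treat Schwartz $f$. First I would dispose of the endpoint $p=p_c$: by $l^2$ decoupling and then H\"older in $\theta$ over the $N$ caps, $\|f\|_{p_c}\lesssim_\epsilon\delta^{-\epsilon}(\sum_\theta\|f_\theta\|_{p_c}^2)^{1/2}\le\delta^{-\epsilon}N^{\frac12-\frac1{p_c}}(\sum_\theta\|f_\theta\|_{p_c}^{p_c})^{1/p_c}$, and since $\frac12-\frac1{p_c}=\frac1{n+1}$ one has $N^{\frac12-\frac1{p_c}}\sim\delta^{-\frac{n-1}{2(n+1)}}=\delta^{\frac n{p_c}-\frac{n-1}2}$, which is exactly \eqref{e6} for $p=p_c$.

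Next I would record the other endpoint, $p=\infty$: trivially $\|f\|_\infty\le\sum_\theta\|f_\theta\|_\infty\le N\sup_\theta\|f_\theta\|_\infty$, so \eqref{e6} holds for $p=\infty$ with constant $N\sim\delta^{-\frac{n-1}2}$. The range $p_c<p<\infty$ then follows by interpolating these two endpoint inequalities: writing $\frac1p=\frac{1-t}{p_c}$ with $t\in(0,1)$, the interpolated constant is $\lesssim_\epsilon(\delta^{\frac n{p_c}-\frac{n-1}2-\epsilon})^{1-t}(\delta^{-\frac{n-1}2})^t=\delta^{\frac np-\frac{n-1}2-(1-t)\epsilon}$, which is \eqref{e6} after relabelling $\epsilon$. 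To make this rigorous I would fix a smooth Fourier multiplier $P_\theta$ adapted to a mild, finitely overlapping enlargement of each $\theta$ (so that $P_\theta$ is bounded on every $L^q$, uniformly, and $P_\theta f_\theta=f_\theta$) and view the decoupling inequality as the boundedness of $T\colon(g_\theta)_\theta\mapsto\sum_\theta P_\theta g_\theta$ from $\ell^q(L^q(\R^n))$ to $L^q(\R^n)$. Since $\ell^q(L^q(\R^n))$ is just $L^q$ of the product of Lebesgue measure with counting measure on $\{1,\dots,N\}$, the two endpoint bounds at $q=p_c$ and $q=\infty$ interpolate by Riesz--Thorin/Stein, and applying the resulting inequality to $g_\theta=f_\theta$ gives \eqref{e6}; here one uses the insensitivity of the $l^2$ decoupling theorem to replacing the $\theta$'s by such an enlargement.

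The only genuine point of care is the rigour of this interpolation; once one adopts the operator formulation above it reduces to classical interpolation of Lebesgue spaces, so the theorem carries essentially no new decoupling content beyond the $l^2$ theorem, and all the real difficulty is imported through the latter. A variant that avoids the operator language is to first upgrade $l^2(L^{p_c})$ decoupling to $l^2(L^p)$ decoupling for $p\ge p_c$, with constant $\delta^{-(\frac{n-1}4-\frac{n+1}{2p})-\epsilon}$ (itself obtained by interpolating against the trivial $l^2(L^\infty)$ bound, whose constant is $N^{1/2}$), and then pass from $\ell^2$ to $\ell^p$ by H\"older over the $N$ caps; a direct computation shows the two losses combine to the exponent $\frac np-\frac{n-1}2$, matching \eqref{e6}. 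For orientation, this exponent is sharp: for $\widehat f=\1_{\A_\delta}$ and $p\ge p_c$ one has $\|f\|_p\sim\delta$ and $(\sum_\theta\|f_\theta\|_p^p)^{1/p}\sim\delta^{\frac{n+1}2-\frac np}$, so both sides of \eqref{e6} are of order $\delta$ and only the factor $\delta^{-\epsilon}$ could conceivably be improved.
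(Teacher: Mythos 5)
Your argument has a fatal gap at the very first step: the ``$l^2$ decoupling theorem for hypersurfaces with nonvanishing Gaussian curvature'' you want to cite does not exist in that generality. The inequality
$$\|f\|_{p}\lesssim_\epsilon \delta^{-\frac{n-1}{4}+\frac{n+1}{2p}-\epsilon}\Bigl(\sum_{\theta\in\P_\delta}\|f_\theta\|_p^2\Bigr)^{1/2},\qquad p\ge \tfrac{2(n+1)}{n-1},$$
is the result of \cite{BD3}, and it is proved \emph{only} for hypersurfaces with \emph{definite} second fundamental form. As the introduction of the present paper points out explicitly, this $l^2$ inequality is \emph{false} for the hyperbolic paraboloid (take $\widehat f$ supported in a $\delta$--neighborhood of a line contained in the surface: the left side beats the right side by a positive power of $\delta^{-1}$, because the caps meeting that line behave like collinear pieces with no transversality gain). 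So the premise ``$\|f\|_{p_c}\lesssim_\epsilon\delta^{-\epsilon}(\sum_\theta\|f_\theta\|_{p_c}^2)^{1/2}$ for all surfaces with nonzero Gaussian curvature'' is not available, and your interpolation against the trivial $L^\infty$ bound has nothing to interpolate with in the hyperbolic case. In the elliptic case your deduction is correct, but that case is already contained in \cite{BD3} and is not what the theorem is about.

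The whole point of the paper is that, although $l^2$ decoupling fails for indefinite curvature, the weaker $l^p$ version \eqref{e6} still holds, and this requires a genuinely new argument. The paper's proof cannot pass through the $l^2$ estimate. Instead it reduces to the model paraboloids $H^{n-1}_\upsilon$, develops a multilinear $l^p$ decoupling theory (constants $C_{p,n,\upsilon}(\delta,\nu)$), and runs a Bourgain--Guth induction on scales (Proposition~\ref{hcnyf7yt75ycn8u32r8907n580-9=--qc mvntvu5n8t}) in which the obstruction you are implicitly ignoring shows up concretely: the nontransverse contribution is supported near a slice $S_{\E,\upsilon}$ of the hyperbolic paraboloid, and such slices can be cylinders with a vanishing principal curvature. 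Handling this requires Lemma~\ref{l:121} (an invertible symmetric form restricted to an $m$--plane has at most $n-m$ small principal curvatures), Lemma~\ref{l2}/Lemma~\ref{l7} (decoupling with a penalty for one small curvature), and a downward induction on dimension $n$. None of this is captured by a two--point interpolation against the trivial bound. Your sharp-example check at the end is fine, but it only confirms that the exponent is optimal; it does not help with the upper bound for indefinite signature.
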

This is a close cousin of the following $l^2$ decoupling proved in \cite{BD3} in the case when $S$ has definite second fundamental form
\begin{equation}
\label{e41}
 \|f\|_p\lesssim_\epsilon \delta^{-\frac{n-1}4+\frac{n+1}{2p}-\epsilon}(\sum_{\theta\in \P_\delta}\|f_\theta\|_p^2)^{1/2},\;\;p\ge\frac{2(n+1)}{n-1}.
\end{equation}
We point out that \eqref{e6} is slightly weaker than \eqref{e41} as it follows from  \eqref{e41} via H\"older's inequality. In particular, Theorem \ref{t1} for $n=2$ is contained in \cite{BD3}. We mention that sharp $l^p$ decouplings were first considered by Wolff  in the case of the cone, see \cite{TWol}.

As briefly explained in \cite{BD3}, \eqref{e41} is false for the hyperbolic paraboloid due to the fact that it contains lines. On the other hand, apart from the dependence on $\epsilon$, inequality \eqref{e6} is sharp, as is \eqref{e41}. This can be easily seen by considering the case of the sphere $S=S^{n-1}$ and $f$ with $\widehat{f}=1_{\A_\delta}$.
 The main new difficulty in proving \eqref{e6} as compared to \eqref{e41} is the fact that intersections of hyperbolic paraboloids with hyperplanes do not always have nonzero Gaussian curvature. It will  however be crucial to our argument the fact that at most one of the principal curvatures of these sections can be small.
\bigskip

As an application we consider curves $\Phi:[0,1]\to\R^n$, $\Phi(t)=(\phi_1(t),\ldots,\phi_{n}(t))$ with $\phi_i\in C^n([0,1])$ and such that the Wronskian
$$W(\phi_1',\ldots,\phi_{n}')(t)$$
is nonzero on $[0,1]$.
Note in particular that such a  curve does not lie in a hyperplane.

Abusing earlier notation, let $\A_\delta$ be the $\delta$ neighborhood of $\Phi$ and let $\P_\delta$ be the cover of  $\A_\delta$ with $\delta$ neighborhoods $\theta$ of the restrictions of $\Phi$ to dyadic intervals of length $\delta^{1/n}$.
 We will as before denote by $f_\theta$ the Fourier restriction of $f$ to $\theta$.
\begin{theorem}
\label{t2}
For each such curve $\Phi$ and each $f:\R^n\to\C$ with Fourier support in $\A_\delta$ we have
$$\|f\|_{L^{2(n+1)}(\R^n)}\lesssim_\epsilon\delta^{-\frac1{2(n+1)}-\epsilon}(\sum_{\theta\in \P_\delta}\|f_\theta\|_{L^{2(n+1)}(\R^n)}^{2(n+1)})^{\frac1{2(n+1)}},$$
for each $\epsilon>0$.
\end{theorem}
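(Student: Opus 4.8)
The plan is to deduce the $l^{2(n+1)}$ decoupling for the curve $\Phi$ from the hypersurface decoupling in Theorem \ref{t1}, applied iteratively in decreasing dimensions. The key geometric input is that, after a suitable affine rescaling, a small arc of $\Phi$ of length $\rho$ sits inside a thin neighborhood of a hypersurface with nonzero Gaussian curvature: writing $\Phi$ in a frame adapted to its osculating data, the first two coordinates of $\Phi$ near a point $t_0$ trace out a curve whose second-order behavior is a genuine parabola (the Wronskian condition forces $\phi_1'\phi_2''-\phi_2'\phi_1''\neq 0$), so a $\delta$-neighborhood of $\Phi$ lives inside a $\sim\delta$-neighborhood of the cylinder over a nondegenerate curve in the $(\xi_1,\xi_2)$-plane. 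More precisely I would run a Pramanik--Seeger / Bourgain--Guth style induction: at scale $\sigma=\delta^{1/n}$, project the problem onto the first $n-1$ coordinates, observe that $\A_\delta$ is contained in the $\delta$-neighborhood of a hypersurface in $\R^n$ with nonzero Gaussian curvature (this uses only $\phi_1'',\dots$ and the nonvanishing of a $2\times 2$ Wronskian minor), and apply Theorem \ref{t1} with $p=\frac{2(n+1)}{n-1}\cdot\frac{n-1}{n+1}$... here one must be careful about the correct exponent bookkeeping.

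Concretely, the cleanest route is the standard reduction \emph{curve in $\R^n$ $\Rightarrow$ curve in $\R^{n-1}$ plus hypersurface decoupling}: partition $[0,1]$ into intervals $J$ of length $\delta^{1/n}$, group them into their $\delta^{1/n}$-neighborhoods, and first decouple $f$ into pieces $f_\tau$ associated with arcs $\tau$ of length $\delta^{1/n}$ using Theorem \ref{t1} for the hypersurface obtained by forgetting the last coordinate; the curvature hypothesis guarantees that this hypersurface — the graph of $\xi_n$ as a function of $(\xi_1,\dots,\xi_{n-1})$ restricted to the curve, fattened appropriately — has nonzero Gaussian curvature at the relevant scale. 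This gives a gain of $\delta^{-\epsilon}$ times a power of $\delta^{1/n}$ matching $\delta^{\frac{n}{p}-\frac{n-1}{2}}$ with the two-dimensional... I would then rescale each arc $\tau$ of length $\delta^{1/n}$ back to a curve of the same type on $[0,1]$ at scale $\delta^{1-2/n}$ (parabolic rescaling adapted to $\Phi$: $t\mapsto t_0+\delta^{1/n}s$ followed by an affine map straightening the moment curve structure), and iterate. After $n-1$ steps of this induction-on-scales the cumulative loss is $\delta^{-\frac{1}{2(n+1)}-\epsilon}$, which is exactly the claimed exponent, and the final base case is a trivial flat decoupling.

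The main obstacle I anticipate is organizing the induction so that the affine rescalings at each stage remain \emph{uniform} over the choice of arc — i.e. that the rescaled curves stay in a fixed compact family of curves satisfying the Wronskian hypothesis with constants bounded below, so that Theorem \ref{t1} applies with a uniform implicit constant throughout the iteration. This is where the phrase ``at most one of the principal curvatures of these sections can be small'' from the discussion after Theorem \ref{t1} is essential: when one projects the curve's neighborhood to a lower-dimensional hypersurface, some principal curvatures of the relevant section may degenerate, but never more than one at a time, so Theorem \ref{t1} (which is stated for \emph{nonzero} Gaussian curvature but whose proof tolerates exactly this degeneracy) still applies. Making the bookkeeping of exponents close precisely — tracking how the $\delta^{\frac{n}{p}-\frac{n-1}{2}-\epsilon}$ factors from each dimension $m=n,n-1,\dots,2$ telescope to $\delta^{-\frac{1}{2(n+1)}-\epsilon}$ at the final exponent $p=2(n+1)$ — is the routine-but-delicate part; the conceptual content is entirely in Theorem \ref{t1} and the uniformity of the rescaling.
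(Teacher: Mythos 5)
Your proposal misses the central mechanism of the paper's proof and, as written, does not work. The key step you have not identified is the \emph{multilinearization}: the paper considers the product $\prod_{j=1}^{n-1}E_{I_j}f$ for $n-1$ transverse intervals $I_1,\ldots,I_{n-1}\subset[0,1]$, observes that this product is itself the extension operator of an $(n-1)$-dimensional hypersurface $S$ in $\R^n$ parametrized by
$$(t_1,\ldots,t_{n-1})\mapsto\bigl(t_1+\cdots+t_{n-1},\;t_1^2+\cdots+t_{n-1}^2,\;\ldots,\;t_1^n+\cdots+t_{n-1}^n\bigr),$$
and then carries out a Hessian computation (the mean value theorem step deferred to \cite{Bo}) to show that this hypersurface has everywhere nonzero Gaussian curvature when the $t_j$ are separated. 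This is Proposition \ref{p2}, and it is the only place Theorem \ref{t1} enters; the passage from this multilinear estimate to the linear decoupling is the Bourgain--Guth dichotomy (Proposition \ref{p3}) followed by parabolic rescaling and iteration (Proposition \ref{p4}). Nothing in your writeup reproduces or replaces this step.

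The alternative you propose --- project onto the first $n-1$ or first $2$ coordinates, fatten, and invoke Theorem \ref{t1} directly --- fails geometrically. The curve and its $\delta$-tube are one-dimensional objects; projecting to fewer coordinates gives another one-dimensional curve, not a hypersurface. The ``cylinder over a nondegenerate plane curve'' you describe is an $(n-1)$-dimensional surface, but for $n\ge 3$ it has $n-2$ vanishing principal curvatures and hence \emph{zero} Gaussian curvature, so Theorem \ref{t1} does not apply to it. Decoupling for a cylinder reduces by Fubini to the lower-dimensional problem and yields only the trivial $l^p$ estimate in the flat directions; the compounded loss would be far worse than $\delta^{-1/(2(n+1))}$. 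You also invoke the remark that ``at most one principal curvature can be small'' as if it licenses applying Theorem \ref{t1} to a degenerate cylinder, but that remark concerns hyperplane sections of hyperbolic paraboloids inside the proof of Theorem \ref{t1} (Lemmas \ref{l2}--\ref{l7}), and even there a single flat direction costs a factor $\delta^{1/(2p)-1/4}$; it does not remove the curvature hypothesis from the theorem's statement, and it certainly does not handle $n-2$ flat directions. The correct source of a genuinely curved $(n-1)$-dimensional hypersurface in $\R^n$ associated to the curve is not a projection of the curve at all, but the \emph{sum set} parametrization that emerges from multiplying extension operators over transverse arcs. This idea is absent from your proposal, and supplying it is the heart of the matter.
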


By using a limiting procedure (see for example the discussion in \cite{BD3} and \eqref{e11} below) one can replace $f$ with a sum of Dirac deltas and derive the following corollary.
\begin{corollary}
\label{corrrrr771}
Fix $\Phi$ as above and let $p\le 2(n+1)$. Then for each $\delta$-separated set $\Lambda$ which consists of $\sim \delta^{-1}$ points on the curve $\Phi$ we have
\begin{equation}
\label{ropig90580vyunmv45r0-c489rt870-910-}
(\frac{1}{|B_R|}\int_{B_R}|\sum_{\xi\in\Lambda}e(\xi\cdot x)|^pdx)^{1/p}\lesssim_\epsilon \delta^{-\epsilon}|\Lambda|^{1/2},
\end{equation}
for each $\epsilon$ and each ball $B_R\subset \R^n$ of radius $R\gtrsim \delta^{-n}$.
\end{corollary}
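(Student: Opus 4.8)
The plan is to reduce to the endpoint $p=2(n+1)$ and then to apply Theorem \ref{t2} \emph{at the finer scale $\delta^n$}, where its caps are arcs of $\Phi$ of length $\delta$ and hence contain only $O(1)$ points of the $\delta$-separated set $\Lambda$, so that the decoupling loses essentially nothing on each cap.

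First, since $|B_R|^{-1}dx$ is a probability measure on $B_R$, the left side of \eqref{ropig90580vyunmv45r0-c489rt870-910-} is nondecreasing in $p$, so it is enough to treat $p=2(n+1)$. Next comes the limiting procedure: fix a Schwartz function $\eta$ with $\widehat\eta$ supported in the unit ball and $\eta\gtrsim1$ on the unit ball, and put $f(x)=\eta(x/R)\sum_{\xi\in\Lambda}e(\xi\cdot x)$, so that $\widehat f$ is supported in $\bigcup_{\xi\in\Lambda}B(\xi,1/R)$. Because every $\xi\in\Lambda$ lies on $\Phi$ and the hypothesis $R\gtrsim\delta^{-n}$ gives $1/R\lesssim\delta^n$, this support lies inside the $\delta^n$-neighborhood of $\Phi$; this is the only use of the assumption $R\gtrsim\delta^{-n}$.

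Now apply Theorem \ref{t2} with $\delta$ replaced by $\delta^n$. Its caps $\theta\in\P_{\delta^n}$ are $\delta^n$-neighborhoods of arcs of $\Phi$ of length $(\delta^n)^{1/n}=\delta$; each such arc has diameter $\lesssim\delta$ and hence meets at most $O(1)$ points of $\Lambda$. Thus $f_\theta=\eta(\cdot/R)\sum_{\xi\in\Lambda\cap\theta}e(\xi\cdot x)$ with $|\Lambda\cap\theta|\lesssim1$, so $|f_\theta|\lesssim|\eta(\cdot/R)|$ pointwise and $\|f_\theta\|_{2(n+1)}^{2(n+1)}\lesssim\int_{\R^n}|\eta(x/R)|^{2(n+1)}\,dx\lesssim|B_R|$ for the $\lesssim|\Lambda|$ nonempty caps, the rest being zero. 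Combining this with Theorem \ref{t2} and with $|\eta(x/R)|\gtrsim1$ on $B_R$ on the left-hand side gives
$$\int_{B_R}\Big|\sum_{\xi\in\Lambda}e(\xi\cdot x)\Big|^{2(n+1)}dx\le\|f\|_{2(n+1)}^{2(n+1)}\lesssim_\epsilon\delta^{-n-\epsilon}\,|\Lambda|\,|B_R|.$$
Dividing by $|B_R|$ and using $|\Lambda|\sim\delta^{-1}$, so that $\delta^{-n}|\Lambda|\sim|\Lambda|^{n+1}=|\Lambda|^{2(n+1)/2}$, and then taking $2(n+1)$-th roots yields \eqref{ropig90580vyunmv45r0-c489rt870-910-} after relabeling $\epsilon$.

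There is essentially no obstacle here; the only mildly delicate point is the standard limiting procedure of the second paragraph, whose content is just the numerology $1/R\lesssim\delta^n$ that places $\widehat f$ inside the $\delta^n$-neighborhood of $\Phi$ (the analogous passage is carried out in detail in \cite{BD3}). What is worth emphasizing is the choice of scale: running Theorem \ref{t2} at the natural scale $\delta$ would force each cap to carry $\sim\delta^{1/n-1}$ points of $\Lambda$, and the crude per-cap bound would then be lossy, whereas at scale $\delta^n$ the caps are $\delta$-arcs and the per-cap estimate is both trivial and sharp.
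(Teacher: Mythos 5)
Your argument is correct and is essentially the paper's intended route: the paper points to the equivalent extension-operator formulation \eqref{e11} with $R=\delta^{-n}$ and a limiting procedure, while you apply Theorem \ref{t2} at scale $\delta^n$ directly to the smoothed sum $f=\eta(\cdot/R)\sum_{\xi\in\Lambda}e(\xi\cdot x)$ — the same numerology (caps of length $\delta$, hence $O(1)$ points per cap, and $\delta^{-n}|\Lambda|\sim|\Lambda|^{n+1}$) in either formulation. The one point you gloss over slightly is that $f_\theta$ is a sharp Fourier cutoff, so if some $B(\xi,1/R)$ straddles a cap boundary then $f_\theta$ is not literally $\eta(\cdot/R)\sum_{\xi\in\Lambda\cap\theta}e(\xi\cdot x)$; this is harmless (each ball meets $O(1)$ caps and half-space multipliers are $L^p$-bounded, or one perturbs the partition), and is exactly what the cited ``standard limiting procedure'' in \cite{BD3} absorbs.
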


Given an integer $k\ge 2$ we define the  $k$-energy of $\Lambda$ as
$$\B_k(\Lambda)=|\{(\lambda_1,\ldots,\lambda_{2k})\in \Lambda^{2k}:\;\lambda_1+\ldots+\lambda_k=\lambda_{k+1}+\ldots+\lambda_{2k}\}|.$$
By letting $R\to\infty$ in \eqref{ropig90580vyunmv45r0-c489rt870-910-} with $p=2(n+1)$ we immediately get that $$\B_{n+1}(\Lambda)\lesssim_\epsilon \Lambda^{n+1+\epsilon}$$  for each $\Lambda$ as in Corollary \ref{corrrrr771}. In particular, by applying this to the curve
$$\Phi(t)=(t,t^2,\ldots,t^n)$$
we recover (again, apart from the $\epsilon$ loss) the result of Hua \cite{Hu}
$$\B_{n+1}(\{(l,l^2,\ldots,l^n):\;l=1,2,\ldots,N\})\lesssim_\epsilon N^{n+1+\epsilon}.$$
Our method however shows that the integer case is not special, but is rather a particular case of a larger phenomenon. 

Further applications of variants of inequality \eqref{ropig90580vyunmv45r0-c489rt870-910-} to number theory are presented in \cite{Bo}.

\section{$l^p$ decouplings for hypersurfaces}
\bigskip

In this section we present the proof of Theorem \ref{t1}. We start by observing that the induction on scales argument from the last section in \cite{BD3} allows us to focus on the hypersurfaces $H^{n-1}_\upsilon$.

The proof of Theorem \ref{t1} for $H^{n-1}_\upsilon$  will be done in two separate stages. First, we develop the multilinear decoupling theory and show that it is essentially equivalent to its linear counterpart.

Let $g:H^{n-1}_\upsilon\to\C$. For a cap $\tau$ on $H^{n-1}_\upsilon$ we let $g_\tau=g1_\tau$ be the (spatial) restriction of $g$ to $\tau$.
We denote by $\pi:H^{n-1}_\upsilon\to [-1/2,1/2]^{n-1}$ the projection map and by $d\sigma$ the natural surface measure on $H^{n-1}_\upsilon$.
\begin{definition}
We say that the caps $\tau_1,\ldots,\tau_n$ on $H^{n-1}_\upsilon$ are $\nu$-transverse if the volume of the parallelepiped spanned by any unit normals $n_i$ at $\tau_i$ is greater than $\nu$.
\end{definition}

In the following, the norm $\|f\|_{L^p(w_{B_R})}$ will refer to the weighted $L^p$ integral $$(\int_{\R^n}|f(x)|^pw_{B_R}(x)dx)^{1/p}$$
for some weight $w_{B_R}$ which is Fourier supported in $B(0,\frac{1}{R})$ and satisfies
\begin{equation}
\label{wnret78u-0943mt7-w,-,1ir8gnrnfyqerbtf879wumweryermf,}
1_{B_R}(x)\lesssim w_{B_R}(x)\le(1+\frac{|x-c(B_R)|}{R})^{-10n}.
\end{equation}

We denote by $C_{p,n,\upsilon}(\delta,\nu)$ the smallest constant such that
$$\|(\prod_{i=1}^n|\widehat{g_{\tau_i}d\sigma}|)^{1/n}\|_{L^p(B_{\delta^{-1}})}\le C_{p,n,\upsilon}(\delta,\nu)\left[\prod_{i=1}^n(\sum_{\theta:\;\delta^{1/2}-\text{cap}\atop{\theta\subset\tau_i}}\|\widehat{g_{\theta}d\sigma}\|_{L^p(w_{B_{\delta^{-1}}})}^p)^{1/p}\right]^{1/n},$$
for each $\nu$-transverse caps $\tau_i\subset H^{n-1}_\upsilon$, each $\delta^{-1}$ ball $B_{\delta^{-1}}$ and each $g:H^{n-1}_\upsilon\to\C$.

Let also $K_{p,n,\upsilon}(\delta)$ be the smallest constant such that
$$\|\widehat{gd\sigma}\|_{L^p(B_{\delta^{-1}})}\le K_{p,n,\upsilon}(\delta)(\sum_{\theta:\delta^{1/2}-\text{cap}}\|\widehat{g_{\theta}d\sigma}\|_{L^p(w_{B_{\delta^{-1}}})}^p)^{1/p},$$
for each $g:H^{n-1}_\upsilon\to\C$ and each $\delta^{-1}$ ball $B_{\delta^{-1}}$.

\bigskip

H\"older's inequality gives
$$C_{p,n,\upsilon}(\delta,\nu)\le K_{p,n,\upsilon}(\delta).$$
We will show that the reverse inequality essentially holds true. Recall that the ultimate goal is to prove that for $2\le p\le \frac{2(n+1)}{n-1}$
\begin{equation}
\label{e15}
K_{p,n,\upsilon}(\delta)\lesssim_\epsilon \delta^{-\frac{n-1}{2}(\frac12-\frac1p)-\epsilon}.
\end{equation}

\begin{theorem}
\label{t4}
Fix $n\ge 3$, $\upsilon\in\{-1,1\}^{n-1}$ and let $p\ge 2$. Assume one of the following holds

(i) $n=3$

(ii) $n\ge 4$ and
\begin{equation}
\label{e42}
K_{p,n-2,\upsilon'}(\delta')\lesssim_\epsilon {\delta'}^{-\frac{n-3}{2}(\frac12-\frac1p)-\epsilon}
\end{equation}
 for each $\delta'>0$, $\upsilon'\in\{-1,1\}^{n-3}$ and  each $\epsilon>0$.

Then for each $0<\nu\le 1$  there is $\epsilon(\nu)$ with $\lim_{\nu\to 0}\epsilon (\nu)=0$ and $C_\nu$ such that
$$K_{p,n,\upsilon}(R^{-1})\le C_\nu R^{\epsilon(\nu)}C_{p,n,\upsilon}(R^{-1},\nu)$$
for each $R>1$.
\end{theorem}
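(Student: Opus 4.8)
The plan is to deduce the linear decoupling constant $K_{p,n,\upsilon}$ from the multilinear one $C_{p,n,\upsilon}$ by the now-standard Bourgain--Guth broad/narrow dichotomy, adapted to the $l^p$ setting. Fix a ball $B_{R}$ of radius $R$ and a function $g$. Partition $[-1/2,1/2]^{n-1}$ into caps of size $K^{-1}$ for a large constant $K$ (to be a small power of $R$ at the end), and correspondingly write $\widehat{gd\sigma}=\sum_\alpha \widehat{g_\alpha d\sigma}$ over $K^{-1}$-caps $\alpha$. At each point $x$, either a few caps dominate (the narrow case) or there exist $n$ caps $\alpha_1,\dots,\alpha_n$ that are $\sim K^{-O(1)}$-transverse and each of size $\gtrsim K^{-O(1)}\max_\alpha|\widehat{g_\alpha d\sigma}(x)|$ (the broad case). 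Thus pointwise
\begin{equation}
\label{narrowbroad}
|\widehat{gd\sigma}(x)|\lesssim K^{O(1)}\max_{\alpha}|\widehat{g_\alpha d\sigma}(x)|+K^{O(1)}\max_{\alpha_1,\dots,\alpha_n \text{ transverse}}\Big(\prod_{i=1}^n|\widehat{g_{\alpha_i}d\sigma}(x)|\Big)^{1/n},
\end{equation}
where the maxima are over $K^{-1}$-caps. Taking $L^p(B_R)$ norms, the broad term is controlled by $C_{p,n,\upsilon}(R^{-1},K^{-O(1)})$ applied on $B_R$, after a routine summation over the finitely many ($\lesssim K^{O(n)}$) transverse tuples and a finitely-overlapping decomposition of $B_R$ into $R$-balls; this contributes $K^{O(1)}C_{p,n,\upsilon}(R^{-1},K^{-O(1)})$ times the $l^p(\theta)$ square-function-type right-hand side.

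The narrow term is handled by parabolic rescaling followed by iteration. For a single $K^{-1}$-cap $\alpha$, the affine rescaling that maps $\alpha$ to the full paraboloid (or, in the hyperbolic case, to a possibly distorted $H^{n-1}_{\upsilon}$ with the same signature $\upsilon$) shows
\begin{equation}
\label{rescale}
\|\widehat{g_\alpha d\sigma}\|_{L^p(w_{B_R})}\le K_{p,n,\upsilon}(RK^{-2})\Big(\sum_{\theta\subset\alpha}\|\widehat{g_\theta d\sigma}\|_{L^p(w_{B_R})}^p\Big)^{1/p},
\end{equation}
since a $\delta^{1/2}$-cap inside $\alpha$ becomes a $(\delta/K^{-2})^{1/2}$-cap after rescaling. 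Summing the $p$-th powers over the $\lesssim K^{n-1}$ caps $\alpha$ and invoking \eqref{narrowbroad}, one gets a recursive inequality of the shape
\[
K_{p,n,\upsilon}(R^{-1})\le C K^{O(1)}K_{p,n,\upsilon}(RK^{-2})+C_\nu K^{O(1)}C_{p,n,\upsilon}(R^{-1},\nu),
\]
with $\nu\sim K^{-O(1)}$. Choosing $K=R^{\eta}$ for a small $\eta>0$ and iterating this $O(1/\eta)$ times, the $\log R/\log K$ factors of $K^{O(1)}$ multiply up to $R^{O(\eta)}$, and after $\sim 1/(2\eta)$ steps the scale has dropped to $O(1)$ where $K_{p,n,\upsilon}$ is bounded; setting $\epsilon(\nu)=O(\eta)$ with $\eta\to 0$ as $\nu\to 0$ yields exactly $K_{p,n,\upsilon}(R^{-1})\le C_\nu R^{\epsilon(\nu)}C_{p,n,\upsilon}(R^{-1},\nu)$.

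The step I expect to be the main obstacle is producing the correct narrow/broad dichotomy \eqref{narrowbroad} in the hyperbolic case and, more subtly, making the rescaling \eqref{rescale} legitimate there. Unlike the elliptic situation of \cite{BD3}, a $K^{-1}$-cap of a hyperbolic paraboloid, after the affine normalization, is again a (rescaled) hyperbolic paraboloid of the \emph{same} signature $\upsilon$ — so one must check that the constants $K_{p,n,\upsilon}$ are stable under the bounded linear distortions that arise, i.e. that they depend on $\upsilon$ only through the signs $\upsilon\in\{-1,1\}^{n-1}$ and not on the precise coefficients (this is where the hypotheses $\upsilon\in\{-1,1\}^{n-1}$ and the affine-invariance of decoupling are used). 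The hypothesis (ii), namely the lower-dimensional bound \eqref{e42} for $K_{p,n-2,\upsilon'}$, is not needed for this reduction itself but is recorded here because it will feed into the companion estimate bounding $C_{p,n,\upsilon}$ from below (the multilinear-to-lower-dimensional step) in the next stage of the argument; its role in the present theorem is merely to make the statement align with how $K_{p,n,\upsilon}$ will ultimately be estimated. For $n=3$ no such lower-dimensional input is required, which is why case (i) is unconditional. Everything else — the finite-overlap bookkeeping, the weight manipulations with $w_{B_R}$, the passage between $\|\cdot\|_{L^p(B_R)}$ and $\|\cdot\|_{L^p(w_{B_R})}$ — is routine and contributes only harmless constants.
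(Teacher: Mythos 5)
Your overall strategy — relating $K_{p,n,\upsilon}$ to $C_{p,n,\upsilon}$ via a Bourgain--Guth broad/narrow induction on scales — is the right one, and it is what the paper does. But the dichotomy \eqref{narrowbroad} on which your argument rests is incorrect for the hyperbolic paraboloid, and correcting it is exactly where the paper's main new ideas enter.

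The issue is that Bourgain--Guth produces \emph{three} alternatives, not two. Once the largest cap $\alpha^*$ fails to dominate, one picks a second far-away large cap $\alpha^{**}$, which determines a hyperplane $\E$ in the $(\xi_1,\dots,\xi_{n-1})$ base; then either there is a third large cap off the $O(1/K)$-neighborhood $S_\E$ of $\E$, giving transversality, or \emph{all} large caps sit inside $\pi^{-1}(S_\E)$. In this last scenario you may have $\sim K^{n-2}$ caps of comparable size that are \emph{not} $K^{-O(1)}$-transverse: for $H^2_{(1,-1)}$ take all caps near the null line $\xi_2=\xi_1$ — their normals all lie in a fixed $2$-plane, so any $3$ of them span zero volume. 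Your \eqref{narrowbroad}, which allows only ``one cap dominates'' or ``$n$ transverse large caps exist,'' simply fails here, and the elliptic argument you are transplanting from \cite{BD3} does not carry over verbatim. What makes this case genuinely hard for the hyperbolic paraboloid is that the slice $S_{\E,\upsilon}=\pi^{-1}(\E)\cap H^{n-1}_\upsilon$ can be a cylinder (zero Gaussian curvature), so its contribution cannot be decoupled all the way to scale $R^{-1/2}$. The paper proves (Lemma~\ref{l:121}) that such a hyperplane section has at most one small principal curvature, then uses Lemma~\ref{l2} and Lemma~\ref{l7} to decouple the strip down to $K^{-1/2}$-caps with the calibrated loss $K^{\frac{n-1}{2}(\frac12-\frac1p)+\epsilon}$. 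This produces the middle term with the $\beta$-caps in Proposition~\ref{hcnyf7yt75ycn8u32r8907n580-9=--qc mvntvu5n8t}, which your sketch has no analogue of, and the iteration must then track two possible scale drops ($K$ and $K^{1/2}$) rather than one, as in the paper's $\Gamma$-bookkeeping.

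A second, related error: you assert that hypothesis (ii), the lower-dimensional bound \eqref{e42}, ``is not needed for this reduction itself.'' It is. For $n\ge4$, decoupling the strip $\pi^{-1}(S_\E)\cap H^{n-1}_\upsilon$ down to $K^{-1/2}$-caps goes through Lemma~\ref{l7} applied inside the $(n-1)$-dimensional hyperplane containing $S_{\E,\upsilon}$, and the hypothesis of that lemma is precisely the $(n-2)$-dimensional decoupling \eqref{e42}. Hypothesis (ii) is thus consumed in the very broad/narrow step you are trying to carry out, not in some later ``companion estimate.'' (For $n=3$ the slice is one-dimensional and is handled by a Fubini/cylinder argument, which is why (i) is unconditional.) In short: your plan identifies the correct framework, but it skips the central difficulty — the non-transverse strip case — and misreads what hypothesis (ii) is for.
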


We first prove a lemma for paraboloids that are allowed to have one small (possibly zero) principal curvature. The motivation behind this consideration will be explained in the end of the proof of Proposition \ref{hcnyf7yt75ycn8u32r8907n580-9=--qc mvntvu5n8t}.
\begin{lemma}
\label{l2}
Let $n\ge 3$. Fix $\upsilon_1,\ldots,\upsilon_{n-2}\in\{-1,1\}$ and let $|a|\lesssim 1$ be arbitrary, possibly zero. Let $\P_\delta$ be a partition of the neighborhood $\A_\delta$ associated with the hypersurface $H^{n-1}_{(\upsilon_1,\ldots,\upsilon_{n-2},a)}$.

If $\supp(\hat{f})\subset \A_\delta$
then for $p\ge 2$ we have, uniformly over the parameter $|a|\lesssim 1$
$$
\|f\|_p\lesssim \delta^{-\frac12+\frac1p}K_{p,n-1,(\upsilon_1,\ldots,\upsilon_{n-2})}(\delta)(\sum_{\theta\in \P_\delta}\|f_\theta\|_p^p)^{1/p}.
$$
\end{lemma}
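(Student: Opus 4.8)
The plan is to reduce the problem for the $(n-1)$-dimensional hypersurface $H^{n-1}_{(\upsilon_1,\ldots,\upsilon_{n-2},a)}$ to the decoupling constant $K_{p,n-1,(\upsilon_1,\ldots,\upsilon_{n-2})}(\delta)$ for the $(n-2)$-dimensional surface obtained by slicing in the last ``flat'' direction. First I would dyadically decompose the last coordinate variable $\xi_{n-1}\in[-1/2,1/2]$ into $\sim\delta^{-1/2}$ intervals $J$ of length $\delta^{1/2}$, writing $f=\sum_J f_J$ where $f_J$ has Fourier support in the slab $\{\xi_{n-1}\in J\}\cap\A_\delta$. Over each such slab the term $a\xi_{n-1}^2$ is essentially frozen: it varies by $O(\delta)$, which is within the $\delta$-thickening, so the piece of $\A_\delta$ over $J$ is, up to affine maps of bounded distortion, a $\delta$-neighborhood of a translate of the \emph{$(n-2)$-dimensional} paraboloid $H^{n-2}_{(\upsilon_1,\ldots,\upsilon_{n-2})}$ sitting inside an $(n-1)$-plane, times the interval $J$ in the $\xi_{n-1}$ direction. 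The point is that the potentially degenerate direction has been isolated into the slab variable, so curvature is only needed in the remaining $n-2$ directions, which is exactly what $K_{p,n-1,(\upsilon_1,\ldots,\upsilon_{n-2})}$ controls.

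The argument then proceeds in two decoupling steps. Step one: decouple $f$ into the slabs $f_J$. Since the $\delta^{1/2}\times\cdots\times\delta^{1/2}\times\delta$ caps $\theta$ of $H^{n-1}$ that lie over a fixed $J$ project, in the $(\xi_1,\ldots,\xi_{n-2})$ variables together with the ``vertical'' $\xi_n$ variable, onto caps of the frozen $(n-2)$-paraboloid, applying Fubini in $\xi_{n-1}$ and the decoupling inequality defining $K_{p,n-1,(\upsilon_1,\ldots,\upsilon_{n-2})}(\delta)$ slabwise (on each fiber) gives
$$\|f_J\|_p\lesssim K_{p,n-1,(\upsilon_1,\ldots,\upsilon_{n-2})}(\delta)\Big(\sum_{\theta\subset J}\|f_\theta\|_p^p\Big)^{1/p}$$
uniformly in $J$ and in $a$; here one uses that an affine change of variables of bounded eccentricity does not affect the $L^p$ decoupling constant, and that $K_{p,n-1}$ controls decoupling into $\delta^{1/2}$-caps at scale $\delta$ (not just $\delta^{1/(n-1)}$), which it does by its very definition together with the standard parabolic rescaling/trivial-decoupling bookkeeping. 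Step two: handle the $\sim\delta^{-1/2}$ slabs $f_J$ themselves. These are $\delta^{1/2}$-separated in the $\xi_{n-1}$ variable and their Fourier supports are $\delta^{1/2}$-thick slabs, so flat decoupling (Minkowski followed by Hölder in the number of slabs, i.e. the trivial $\ell^p$ decoupling into $N$ pieces costing $N^{1/2-1/p}$) gives
$$\|f\|_p\lesssim (\delta^{-1/2})^{\frac12-\frac1p}\Big(\sum_J\|f_J\|_p^p\Big)^{1/p}=\delta^{-\frac14+\frac1{2p}}\Big(\sum_J\|f_J\|_p^p\Big)^{1/p}.$$
Combining the two steps, the total loss is $\delta^{-\frac14+\frac1{2p}}\cdot\delta^{-\frac14+\frac1{2p}}K_{p,n-1}(\delta)=\delta^{-\frac12+\frac1p}K_{p,n-1,(\upsilon_1,\ldots,\upsilon_{n-2})}(\delta)$, as claimed.

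The main obstacle I anticipate is making Step one rigorous: one must check that over each slab $J$ the thickened hypersurface really is, after a bounded affine map, a product of the $\delta$-neighborhood of the lower-dimensional paraboloid with the interval $J$, so that Fubini genuinely reduces an $n$-dimensional decoupling to applying the $(n-1)$-variable constant $K_{p,n-1}$ on parallel $(n-1)$-dimensional slices — and that the caps $\theta$ match up correctly under this identification (each $\delta^{1/2}$-cap of $H^{n-1}$ inside the slab corresponds to a $\delta^{1/2}$-cap of $H^{n-2}$, a ``tube'' in the $\xi_{n-1}$-direction inside $J$, and doing decoupling fiberwise only into these ``vertical columns'' loses nothing since there is no decoupling performed in $\xi_{n-1}$ at this step). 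The degeneracy of the parameter $a$ is harmless precisely because $a$ only enters through the term $a\xi_{n-1}^2=O(\delta)$ on each slab and hence is absorbed into the $\delta$-neighborhood; this is why the bound is uniform in $|a|\lesssim 1$, including $a=0$. Everything else — the affine-invariance of decoupling constants, flat/trivial $\ell^p$ decoupling, and the compatibility of the cap families — is standard and can be cited from or adapted from \cite{BD3}.
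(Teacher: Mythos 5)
Your overall strategy is exactly the paper's: decouple trivially in the (possibly degenerate) $\xi_{n-1}$ direction into $\sim\delta^{-1/2}$ slabs, and within each slab apply the lower-dimensional decoupling constant $K_{p,n-1,(\upsilon_1,\ldots,\upsilon_{n-2})}(\delta)$ via a Fubini/cylinder argument. Two points in the execution, however, need repair.

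The first is a bookkeeping error. The trivial $\ell^p$ decoupling \eqref{e12} into $N$ Fourier-disjoint pieces costs $N^{1-\frac2p}$, not $N^{\frac12-\frac1p}$ as you state (the latter is the cost of trivial $\ell^2$ decoupling). With $N\sim\delta^{-1/2}$ slabs this gives $(\delta^{-1/2})^{1-\frac2p}=\delta^{-\frac12+\frac1p}$ from your Step~two alone, which combined with the factor $K_{p,n-1,(\upsilon_1,\ldots,\upsilon_{n-2})}(\delta)$ from Step~one already yields the claimed bound. Your computation instead produces $\delta^{-\frac14+\frac1{2p}}$ and then silently multiplies by a second, unexplained $\delta^{-\frac14+\frac1{2p}}$ when "combining the two steps"; the final answer is right, but the accounting is not.

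The second is a genuine gap in the justification of Step~one. Over a slab $J=[\alpha,\alpha+\delta^{1/2}]$ the term $a\xi_{n-1}^2$ varies by $a(\xi_{n-1}-\alpha)(\xi_{n-1}+\alpha)=O(|a|\,\delta^{1/2})$, not by $O(\delta)$ as you claim: for $\alpha$ away from the origin the slab piece of $\A_\delta$ is \emph{tilted}, not within $\delta$ of a product of the lower-dimensional paraboloid with $J$. One must first remove the linear part of $a\xi_{n-1}^2$ on $J$ by a shear; only the residual quadratic piece $a(\xi_{n-1}-\alpha)^2=O(\delta)$ is then absorbed by the $\delta$-thickening, after which the surface is genuinely a cylinder over $H^{n-2}_{(\upsilon_1,\ldots,\upsilon_{n-2})}$ and Fubini applies. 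This is precisely the "parabolic change of coordinates" in the paper's proof, which reduces to $\alpha=0$ via the physical-side transformation $(x_1,x_2,x_3)\mapsto(x_1,x_2+2\alpha a x_3,x_3)$, and which one must also verify (as the paper does) sends $B_{\delta^{-1}}$ to roughly $B_{\delta^{-1}}$, so the localized decoupling inequality is preserved. Your phrase "up to affine maps of bounded distortion" gestures at this, but the stated reason (that the variation is already $O(\delta)$) is false, so the shear needs to be made explicit. With these two corrections your argument coincides with the paper's.
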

Before we embark in the proof of the lemma, we give some heuristics on numerology. A simple $L^p$ orthogonality principle asserts that given any pairwise disjoint subsets $S_1,\ldots,S_{M}$ in $\R^n$ we have
\begin{equation}
\label{e12}
\|f\|_p\le M^{1-\frac2p}(\sum_{i=1}^{M} \|f_{S_i}\|_p^p)^{1/p}
\end{equation}
for each $2\le p\le \infty$ and each  $f$ Fourier supported in the union of the $S_i$. We may refer to this as being {\em trivial $l^p$ decoupling}.
 If the sets $S_i$ are arbitrary, the universal exponent $1-\frac2p$ of $M$ is sharp. To see, it suffices to consider the case when $S_i$ are equidistant unit balls with collinear centers.  However, this exponent becomes smaller when geometry is favorable. For example, the $L^p$ decoupling inequality \eqref{e15} corresponds to $M\sim \delta^{-\frac{n-1}{2}}$, and the exponent there is $\frac12-\frac1p$,  half of the universal one. The absence of curvature is an enemy, and one expects a penalty of $\delta^{\frac1{2p}-\frac14}$ for each zero principal curvature.  For example, when $a$ is small (possibly zero),  $H^{n-1}_{(\upsilon_1,\ldots,\upsilon_{n-2},a)}$ has a decoupling constant  $\delta^{\frac1{2p}-\frac14}$ larger than in the case $a\sim 1$.
\bigskip

\begin{proof}
The proof is rather standard, we sketch it briefly. The case $n=3$ is entirely typical, we prefer it only to simplify the notation. We can of course also assume $\upsilon_1=1$. By first performing the trivial decoupling \eqref{e12} in the direction of $e_2$ (which corresponds to the entry $a$), it suffices to prove that for each $\alpha\in [-\frac12,\frac12]$ we have
\begin{equation}
\label{e13}
\|\sum_{\theta\in \P_{\delta,\alpha}}f_\theta\|_p\lesssim K_{p,2,1}(\delta)(\sum_{\theta\in \P_{\delta,\alpha}}\|f_\theta\|_p^p)^{1/p},
\end{equation}
where $\P_{\delta,\alpha}$ consists of those $\theta\in \P_\delta$ that intersect the parabola
$$\{(\xi_1,\alpha,\xi_3):\xi_3=\xi_1^2+a\alpha^2\}.$$

We next show how a standard parabolic change of coordinates will allow us to assume $\alpha=0$.
It is easy to see (the reader is again referred to \cite{BD3}) that \eqref{e13} is equivalent with
$$\int_{B_{\delta^{-1}}}|\int_{[-1/2,1/2]\times [\alpha,\alpha+\delta^{1/2}]}f(\xi_1,\xi_2)e(x_1\xi_1+x_2\xi_2+x_3(\xi_1^2+a\xi_2^2))d\xi_1d\xi_2|^pdx_1dx_2dx_3\lesssim $$$$ K_{p,2,1}(\delta)^p \sum_{I:\delta^{1/2}-\text{interval}}\int_{B_{\delta^{-1}}}|\int_{I\times [\alpha,\alpha+\delta^{1/2}]}f(\xi_1,\xi_2)e(x_1\xi_1+x_2\xi_2+x_3(\xi_1^2+a\xi_2^2))d\xi_1d\xi_2|^pdx_1dx_2dx_3$$
for each $B_{\delta^{-1}}$. It is now rather immediate that we can assume $\alpha=0$, since the image of $B_{\delta^{-1}}$ under the transformation
$$(x_1,x_2,x_3)\mapsto (x_1,x_2+2\alpha a x_3,x_3)$$
is roughly $B_{\delta^{-1}}$.

Note however that all $\theta\in\P_{\delta,0}$ lie in the $\delta$ neighborhood of the cylinder
$$\{(\xi_1,\xi_2,\xi_3):\xi_3=\xi_1^2\},$$
and \eqref{e13} follows immediately from Fubini.
\end{proof}
\bigskip

A simple induction on scales similar to the one in Section 7 in \cite{BD3} allows us to extend the previous lemma to arbitrary hypersurfaces with (at least) $n-1$ principal  curvatures bounded away from zero.

\begin{lemma}
\label{l7}
 Let $n\ge 3$ and $p\ge 2$. Let $S$ be a $C^2$ compact hypersurface in $\R^n$ which at any given point has at least $n-1$ principal curvatures $\sim 1$ with the remaining one $\lesssim 1$. Let as usual $\P_\delta$ be a partition of the neighborhood $\A_\delta$ associated $S$.
Assume that for each $\delta'>0$
$$\max_{\upsilon\in\{-1,1\}^{n-2}}K_{p,n-1,\upsilon}(\delta')\lesssim_{\epsilon}{\delta'}^{-\frac{n-2}{2}(\frac12-\frac1p)-\epsilon}.$$

Then for each $\delta$ and each $\supp(\hat{f})\subset \A_\delta$
we have
$$
\|f\|_p\lesssim_{\epsilon}{\delta}^{-\frac{n}{2}(\frac12-\frac1p)-\epsilon}(\sum_{\theta\in \P_\delta}\|f_\theta\|_p^p)^{1/p}.
$$
\end{lemma}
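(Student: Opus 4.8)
The plan is to reduce the case of a general hypersurface $S$ with at least $n-1$ large principal curvatures to the model hypersurfaces $H^{n-1}_{(\upsilon_1,\ldots,\upsilon_{n-2},a)}$ already handled in Lemma \ref{l2}, and then to invoke the hypothesis $\max_{\upsilon}K_{p,n-1,\upsilon}(\delta')\lesssim_\epsilon{\delta'}^{-\frac{n-2}2(\frac12-\frac1p)-\epsilon}$ to convert the bound of Lemma \ref{l2} into the claimed power of $\delta$. The first step is the standard localization/parametrization: cover $S$ by finitely many pieces, each of which after a rotation and a $C^2$ change of variables is a graph $\xi_n=\psi(\xi_1,\ldots,\xi_{n-1})$ over a small box, with the Hessian of $\psi$ having $n-1$ eigenvalues comparable to $1$ (in absolute value) and one of size $\lesssim 1$. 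By a further affine change of variables in the first $n-1$ coordinates one diagonalizes the quadratic part at the center of the box, so that $\psi(\xi)=\sum_{i=1}^{n-2}\upsilon_i\xi_i^2+a\xi_{n-1}^2+O(|\xi|^3)$ on that box, with $\upsilon_i\in\{-1,1\}$ and $|a|\lesssim1$.

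The second step is the induction on scales, exactly parallel to Section~7 of \cite{BD3}. One subdivides $[-1/2,1/2]^{n-1}$ (or the relevant box) into subboxes $\sigma$ of sidelength $\delta^{1/2}$ in a two-stage hierarchy: first pass to an intermediate scale $\rho$ with $\delta<\rho<1$, decoupling $f$ into pieces $f_\sigma$ with $\sigma$ of sidelength $\rho^{1/2}$ using the hypothesis applied to the rescaled surface at scale $\delta/\rho$, and then on each such $\sigma$ rescale by the parabolic dilation $(\xi',\xi_n)\mapsto(\rho^{-1/2}(\xi'-\xi'_\sigma),\rho^{-1}(\text{affine correction}))$, which turns the graph of $\psi$ restricted to $\sigma$ into (a small perturbation of) the graph over the unit box of a function of the same type, now at neighborhood scale $\delta/\rho$. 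Iterating and summing the geometric-series of $\epsilon$-losses in the usual way yields
$$\|f\|_p\lesssim_\epsilon \delta^{-\epsilon}\Big(\delta^{-\frac12+\frac1p}\,\max_\upsilon K_{p,n-1,\upsilon}(\delta)\Big)^{\!?}\cdots$$
— more precisely, the self-improving scheme propagates the $n-1$ flat-direction bound of Lemma \ref{l2}, namely $\delta^{-\frac12+\frac1p}K_{p,n-1,\upsilon}(\delta)$, into the $n$-dimensional bound $\delta^{-\frac n2(\frac12-\frac1p)-\epsilon}$, since $-\frac12+\frac1p-\frac{n-2}2(\frac12-\frac1p)=-\frac n2(\frac12-\frac1p)$. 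Here one uses that the perturbation terms $O(|\xi|^3)$, as well as the small principal curvature $a$, are harmless: at each scale the relevant surface still has $n-1$ curvatures $\sim1$, so Lemma \ref{l2} applies uniformly in $a$, which is the whole point of having proved it with $|a|\lesssim1$ allowed to be zero.

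I expect the main obstacle to be bookkeeping rather than a genuine conceptual difficulty: one must verify that the parabolic rescaling at the intermediate scale $\rho$ maps a $\delta$-neighborhood box $\theta$ of $S$ to a box of the correct dimensions $\sim(\delta/\rho)^{1/2}\times\cdots\times(\delta/\rho)^{1/2}\times(\delta/\rho)$ for the rescaled surface, that the affine corrections used to straighten the lower-order terms distort $B_{\delta^{-1}}$ only into a comparable ball (as in the $\alpha=0$ reduction inside the proof of Lemma \ref{l2}), and that the weights $w_{B_{\delta^{-1}}}$ transform acceptably. Since the hypothesis is stated with the sharp exponent and for all $\upsilon\in\{-1,1\}^{n-2}$ (and all intermediate scales $\delta'$), the geometric series in the $\epsilon$-losses closes and one obtains the stated estimate with a final loss $\delta^{-\epsilon}$; I would simply refer the reader to the analogous argument in Section~7 of \cite{BD3} for the details of this iteration.
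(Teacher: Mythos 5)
Your proposal follows essentially the same route as the paper's proof: a two-stage induction on scales in which one first decouples $f$ into intermediate caps, then parabolically rescales each cap to reach unit scale, where by Taylor's formula the surface is a small perturbation of a model $H^{n-1}_{(\upsilon_1,\ldots,\upsilon_{n-2},a)}$ with one possibly small curvature $a$, so Lemma~\ref{l2} applies uniformly; the hypothesis on $K_{p,n-1,\upsilon}$ then converts Lemma~\ref{l2}'s bound into the sharp power of $\delta$ via the identity $-\tfrac12+\tfrac1p-\tfrac{n-2}2(\tfrac12-\tfrac1p)=-\tfrac n2(\tfrac12-\tfrac1p)$, which you correctly identify as the crux. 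One point in your write-up is out of order: the first decoupling (from unit scale to caps of sidelength $\rho^{1/2}$) is carried out with the recursive constant $K_{p,n,S}(\rho)$ for the ambient surface, not with the hypothesis; the hypothesis (through Lemma~\ref{l2}) enters only in the second step, after rescaling each $\rho^{1/2}$-cap to unit scale with neighborhood width $\delta/\rho$. The paper makes the explicit choice $\rho=\delta^{2/3}$, so that each iteration contributes a factor $\delta^{-\frac13\cdot\frac n2(\frac12-\frac1p)-\epsilon}$ and the geometric sum of exponents closes; with that clarification the argument is complete.
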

\begin{proof}
For $\delta<1$, let as before $K_{p,n,S}(\delta)$ be the smallest constant such that for each   $f$ with Fourier support in $\A_\delta$ we have
$$\|f\|_p\le K_{p,n,S}(\delta)(\sum_{\theta\in \P_\delta}\|f_\theta\|_p^2)^{1/2}.$$
First note that for each such $f$
$$\|f\|_p\le K_{p,n,S}(\delta^{\frac23})(\sum_{\tau\in \P_{\delta^{\frac23}}}\|f_\tau\|_p^2)^{1/2}.$$
Second, our assumption on the principal curvatures of $S$ combined with Taylor's formula shows that on each $\tau\in \P_{\delta^{\frac23}}$, $S$ is within $\delta$ from a paraboloid $H^{n-1}_{\upsilon}$ with at least $n-2$ of the entries of $\upsilon$ of order 1. By invoking Lemma \ref{l2} for this paraboloid (via a simple rescaling), combined with  parabolic rescaling we get
$$\|f_\tau\|_p\lesssim (\delta^{1/3})^{\frac1p-\frac12}\max_{\upsilon\in\{-1,1\}^{n-2}}K_{p,n-1,\upsilon}(\delta^{1/3})(\sum_{\theta\in \P_\delta:\theta\subset\tau}\|f_\theta\|_p^2)^{1/2}.$$
For each $\epsilon>0$, we conclude the existence of $C_\epsilon$ such that for each $\delta<1$
$$K_{p,n,S}(\delta)\le C_\epsilon [\delta^{-\frac{n}{2}(\frac12-\frac1p)-\epsilon}]^{1/3}K_{p,n,S}(\delta^{\frac23}).$$
By iteration this immediately leads to the desired conclusion.
\end{proof}

We next present a lemma that will play a key role in the proof of Proposition \ref{hcnyf7yt75ycn8u32r8907n580-9=--qc mvntvu5n8t} below.

\begin{lemma}
\label{l:121}
Let $A$ be an invertible symmetric $n\times n$ matrix and let $S$ be an $m$ dimensional affine subspace of $\R^n$. There exists $\delta=\delta(A)$ such that if the $m$ dimensional quadratic hypersurface
$$x_{m+1}=\langle Ax,x\rangle,\;x\in S$$
has $l$ principal curvatures in the interval $[-\delta,\delta]$ then
$$l\le n-m.$$
\end{lemma}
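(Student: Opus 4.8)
The plan is to reduce the statement to an elementary fact about compressions of the symmetric matrix $A$. I would write the affine subspace as $S=p_0+V$, where $V\subset\R^n$ is a linear subspace of dimension $m$, and view the hypersurface as the graph over $S$ of the quadratic function $q(x)=\langle Ax,x\rangle$. Expanding $q(p_0+v)=\langle Ap_0,p_0\rangle+2\langle Ap_0,v\rangle+\langle Av,v\rangle$ for $v\in V$, the constant and linear terms do not influence the second fundamental form, so the second fundamental form of the graph is, at every point and up to the positive normalizing scalar, the form $(v,w)\mapsto 2\langle Av,w\rangle$ on $V$; that is, it is represented by $2A_V$, where $A_V:=P_VA|_V\colon V\to V$ denotes the compression of $A$ to $V$ ($P_V$ the orthogonal projection onto $V$). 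Since the first fundamental form is $\succeq\mathrm{Id}_V$ and is bounded above by a constant depending only on $A$, a short computation shows that if the hypersurface has $l$ principal curvatures in $[-\delta,\delta]$, then there is an $l$-dimensional subspace $E\subseteq V$ with $\|A_Vu\|\le\delta'\|u\|$ for every $u\in E$, where $\delta'=C_A\delta$ and $C_A$ depends only on $A$.

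It then remains to show that, once $\delta$ is small enough in terms of $A$, necessarily $l\le n-m$. Here is the dimension count I would run. For $u\in E$ we have $A_Vu=P_V(Au)$, hence $\dist(Au,V^\perp)=\|P_V(Au)\|\le\delta'\|u\|$, while $\|Au\|\ge\sigma_{\min}(A)\|u\|>0$ since $A$ is invertible. Consequently every $z\in A(E)$ satisfies $\|P_Vz\|\le(\delta'/\sigma_{\min}(A))\|z\|$. If I choose $\delta(A)$ so that $\delta'<\sigma_{\min}(A)$, then $A(E)\cap V=\{0\}$: a nonzero vector $z$ in this intersection would obey $P_Vz=z$, forcing $\|P_Vz\|=\|z\|$, a contradiction. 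Therefore $\dim A(E)+\dim V\le n$; since $A$ is injective, $\dim A(E)=\dim E=l$, and I conclude $l+m\le n$, that is $l\le n-m$.

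I expect the only genuinely delicate point to be the passage, in the first paragraph, from ``the graph has a small principal curvature'' to ``$A_V$ has a small eigenvalue'', carried out with constants depending only on the fixed matrix $A$; this uniformity is precisely what lets $\delta$ depend on $A$ alone, and it is where invertibility of $A$ (controlling $\sigma_{\min}(A)$) together with the control on the first fundamental form enters. The dimension count itself is routine — it is in effect a special case of the Cauchy interlacing theorem: compressing an invertible symmetric $n\times n$ matrix to an $m$-dimensional subspace can create at most $n-m$ eigenvalues in a small enough neighbourhood of $0$.
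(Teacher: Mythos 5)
Your proof is correct and is essentially the paper's argument in a slightly different dress: you isolate an $l$-dimensional subspace $E\subseteq V$ on which the compression $P_V A|_V$ is small and deduce $A(E)\cap V=\{0\}$, whereas the paper works with the equivalent condition $S_1\cap A^{-1}S=\{0\}$ (your $E$ being its $S_1$); both yield $l+\dim V\le n$ by the same dimension count. Incidentally, the paper's displayed threshold contains a typo and should read $\|P_S Ax\|\le\frac{1}{2}\|A^{-1}\|^{-1}\|x\|$ rather than $\frac{1}{2}\|A^{-1}\|\,\|x\|$, matching exactly the $\delta'<\sigma_{\min}(A)$ requirement you impose.
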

\begin{proof}
We may assume $S$ contains the origin. Choose $\delta$ small enough so that the hypothesis forces the existence of an $l$ dimensional subspace $S_1$ of $S$ such that
\begin{equation}
\label{3dvjher9r576tiocuxmgyutxmcjruit78}
\|P_{S}Ax\|\le \frac12\|A^{-1}\|\|x\|
\end{equation}
for each $x\in S_1$. Here $P_S$ is the orthogonal projection onto $S$. We claim that $S_1\cap A^{-1}S=\{0\}$, which will easily imply the desired conclusion. Indeed, otherwise there is $x\in S_1$ with $\|x\|=1$ and $Ax\in S$, and \eqref{3dvjher9r576tiocuxmgyutxmcjruit78} forces the contradiction.
\end{proof}
\bigskip

Here is the basic step in the Bourgain-Guth-type induction on scales that relates the linear and the multilinear decoupling.

\begin{proposition}
\label{hcnyf7yt75ycn8u32r8907n580-9=--qc mvntvu5n8t}
Fix $n\ge 3$, $\upsilon\in\{-1,1\}^{n-1}$ and let $p\ge 2$. Assume one of the following holds

(i) $n=3$

(ii) $n\ge 4$ and $K_{p,n-2,\upsilon'}(\delta')\lesssim_\epsilon {\delta'}^{-\frac{n-3}{2}(\frac12-\frac1p)-\epsilon}$ for each $\delta'>0$, $\upsilon'\in\{-1,1\}^{n-3}$ and  each $\epsilon>0$.

 Then for each $\epsilon$ there exist constants $C_\epsilon$, $C_n$ such that for each $R>1$ and $K\ge 1$
$$\|\widehat{gd\sigma}\|_{L^p(w_{B_R})}\le C_\epsilon[(\sum_{\alpha\subset H^{n-1}_\upsilon\atop{\alpha:\frac{1}{K}-\text{ cap}}}\|\widehat{g_{\alpha}d\sigma}\|_{L^p(w_{B_R)}}^p)^{1/p}+K^{\frac{n-1}{2}(\frac12-\frac1p)+\epsilon}(\sum_{\beta\subset H^{n-1}_\upsilon\atop{\beta:\frac{1}{K^{1/2}}-\text{ cap}}}\|\widehat{g_{\beta}d\sigma}\|_{L^p(w_{B_R})}^p)^{1/p}]+$$$$+K^{C_n}C_{p,n,\upsilon}(R^{-1},K^{-n})(\sum_{\Delta\subset H^{n-1}_\upsilon\atop{\Delta:\frac1{R^{1/2}}-\text{ cap}}}\|\widehat{g_{\Delta}d\sigma}\|_{L^p(w_{B_R})}^p)^{1/p}$$
\end{proposition}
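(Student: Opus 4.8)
The proposition is the Bourgain--Guth ``broad-narrow'' decomposition adapted to the decoupling setting, so the plan is to follow that template while using Lemma \ref{l7} to handle the lower-dimensional contributions that arise from the narrow case.

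\textbf{Setup of the broad--narrow dichotomy.} Tile $H^{n-1}_\upsilon$ with $\frac1K$-caps $\alpha$ and, for each point $x\in\R^n$ (or rather on each unit ball in a finitely overlapping cover of $B_R$), pick the caps $\alpha$ on which $|\widehat{g_\alpha d\sigma}(x)|$ is, up to a factor of say $2$, maximal. There is a dimensional constant $K_0$ such that either one can find $n$ of the significant caps that are $K^{-n}$-transverse -- the \emph{broad} case -- or all significant caps are contained in a $O(K^{-1})$-neighborhood $\mathcal N$ of some $(n-2)$-dimensional affine subspace intersected with $H^{n-1}_\upsilon$ -- the \emph{narrow} case. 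Concretely one writes
$$|\widehat{gd\sigma}(x)|\lesssim \sum_{\beta:\ \frac1{K^{1/2}}\text{-cap}\atop \beta\subset\mathcal N}|\widehat{g_\beta d\sigma}(x)| + K^{C_n}\max_{\tau_1,\ldots,\tau_n\ K^{-n}\text{-transverse}}\Big(\prod_{i=1}^n|\widehat{g_{\tau_i}d\sigma}(x)|\Big)^{1/n},$$
where the first (narrow) sum is taken at the coarser scale $K^{-1/2}$ to absorb the passage from a neighborhood of an affine subspace to a union of $K^{-1/2}$-caps, and the loss $K^{C_n}$ in the broad term accounts for the number of transverse $n$-tuples. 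Raising to the $p$-th power and integrating against $w_{B_R}$ splits the estimate into a narrow piece and a broad piece.

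\textbf{Narrow piece.} Here all the relevant caps $\beta$ lie in the $K^{-1/2}$-neighborhood of a section of $H^{n-1}_\upsilon$ by a hyperplane, in fact by an $(n-2)$-plane worth of directions; after a linear change of variables this neighborhood is comparable to the $\delta$-neighborhood of a quadratic hypersurface of the type $x_{m+1}=\langle Ax,x\rangle$ on an affine subspace. This is exactly where Lemma \ref{l:121} enters: it guarantees that such a section has at most one small principal curvature (it cannot have two, since $l\le n-m=1$), so Lemma \ref{l7} applies and gives decoupling into $K^{-1/2}$-caps with constant $\lesssim_\epsilon K^{\frac{n-1}{2}(\frac12-\frac1p)+\epsilon}$ -- the exponent predicted by the heuristic on numerology, one half of the universal exponent plus the $\delta^{\frac1{2p}-\frac14}$ penalty for the single flat direction. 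This produces precisely the second term on the right-hand side of the Proposition. (The first term, the trivial bound by the $\frac1K$-caps, is kept available because for $K$ of bounded size the narrow/broad split is vacuous and one just uses the trivial inequality; it is needed for the later iteration.) The hypothesis (i)/(ii) is what makes Lemma \ref{l7}'s hypothesis on $K_{p,n-1,\upsilon}$ available when $n\ge4$, and is vacuous when $n=3$ because the sections are then $1$-dimensional.

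\textbf{Broad piece and assembly.} For the broad term one integrates the $n$-linear expression over $B_R$: parabolic rescaling turns each $K^{-n}$-transverse $n$-tuple of caps $\tau_i$ together with the $R^{-1/2}$-caps $\Delta$ inside them into the defining configuration for $C_{p,n,\upsilon}(R^{-1},K^{-n})$, at the cost of the already-recorded factor $K^{C_n}$ from summing over the (boundedly many, in $K$) choices of transverse tuples and from the rescaling Jacobians. This yields the third term. Summing the narrow and broad contributions gives the claimed inequality. The main obstacle is the narrow case: one must verify carefully that a neighborhood of a hyperplane section of the (possibly hyperbolic) paraboloid, after the appropriate affine normalization, is genuinely a small neighborhood of a hypersurface to which Lemma \ref{l7} applies with the stated constant -- this is precisely the ``at most one small principal curvature'' phenomenon flagged in the introduction, and it is the reason Lemma \ref{l:121} was isolated. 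A secondary technical point is bookkeeping the weights $w_{B_R}$ through the rescalings and the localization to unit balls, but this is routine.
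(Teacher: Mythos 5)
Your overall architecture matches the paper's: the Bourgain--Guth broad--narrow dichotomy, with Lemma~\ref{l:121} and Lemma~\ref{l7} controlling the narrow contribution. But the pointwise decomposition you write for the narrow case is wrong as stated, and taken literally it would not deliver the claimed exponent. You write the narrow term as $\sum_{\beta:\,\frac1{K^{1/2}}\text{-cap},\ \beta\subset\mathcal N}|\widehat{g_\beta d\sigma}(x)|$ with the absolute value \emph{inside} the sum. Once absolute values are inside at scale $K^{-1/2}$, the only available step after integrating in $L^p$ is the trivial $l^1\to l^p$ inequality, yielding a factor $\sim(\#\beta)^{1-\frac1p}\sim K^{\frac{n-2}{2}(1-\frac1p)}$, which is roughly the square of the desired $K^{\frac{n-1}{2}(\frac12-\frac1p)}$; and Lemma~\ref{l7} then has no role to play, since decoupling is an $L^p$ statement about a coherent exponential sum, not about a sum of moduli. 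The paper keeps the narrow term coherent at scale $1/K$, namely $\bigl|\sum_{\alpha\subset\pi^{-1}(S_\E)\cap H^{n-1}_\upsilon}\widehat{g_\alpha d\sigma}(x)\bigr|$, and only after passing to $L^p(B_K)$ applies Lemma~\ref{l7} in dimension $n-1$ inside the slab around $\E$; that application is exactly what produces the $K^{-1/2}$-caps together with the loss $K^{\frac{n-1}{2}(\frac12-\frac1p)+\epsilon}$. You should move the absolute value outside the narrow sum and invoke Lemma~\ref{l7} only at the integrated level.

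A few smaller points. The locally constant heuristic is run on balls of radius $K$, not unit balls, since $|\widehat{g_\alpha d\sigma}|$ for a $1/K$-cap is essentially constant at scale $K$. In the application of Lemma~\ref{l:121}, the relevant ambient dimension is $n-1$ and $m=n-2$, so the conclusion is $l\le(n-1)-(n-2)=1$; your reading $l\le n-m=1$ substitutes $m=n-1$, which is not the configuration here even though the numerical answer agrees. The broad term needs no separate parabolic rescaling: after integrating over $B_K$ and summing over $B_K\subset B_R$, it is already in the shape defining $C_{p,n,\upsilon}(R^{-1},K^{-n})$, with $K^{C_n}$ accounting for the number of transverse $n$-tuples and the H\"older step converting $\prod_i(\sum_{\Delta\subset\alpha_i})^{1/n}$ into $(\sum_\Delta)^{1/p}$. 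Finally, the max-cap term is not kept only for bounded $K$: it is needed inside the narrow case itself, to absorb the contribution of caps near $\alpha^*$ that need not tile cleanly into the $K^{-1/2}$-strips of $S_\E$. And for $n=3$ the section is one-dimensional, so Lemma~\ref{l7} does not apply; the paper uses the trivial $K^{\frac12-\frac1p}$ decoupling there, which happens to coincide with $K^{\frac{n-1}{2}(\frac12-\frac1p)}$.
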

\begin{proof}

We first prove the case $n=3$ and then indicate the modifications needed for $n\ge 4$.

It is rather immediate that if  $Q_1,Q_2,Q_3\subset [-1/2,1/2]^2$, the volume of the parallelepiped spanned by the  unit normals to $H^2_\upsilon$ at $\pi^{-1}(Q_i)$ is comparable to the area of the triangle $\Delta Q_1Q_2Q_3$.

As in \cite{BG}, we may think of $|\widehat{g_{\alpha}d\sigma}|$ as being essentially constant on each ball $B_K$. Denote by $c_\alpha(B_K)$ this value and  let $\alpha^*$ be the cap that maximizes it.

The starting point in the argument is the observation in \cite{BG} that for each $B_K$ there exists a line $L=L(B_K)$ in the $(\xi_1,\xi_2)$ plane such that if
$$S_L=\{(\xi_1,\xi_2): \dist((\xi_1,\xi_2),L)\le \frac{C}{K}\}$$
then for $x\in B_K$
$$ |\widehat{gd\sigma}(x)|\le $$
\begin{equation}
\label{term1.1}C\max_{\alpha}|\widehat{g_{\alpha}d\sigma}(x)|+
\end{equation}\begin{equation}
\label{term1.2}
K^{4}\max_{\alpha_1,\alpha_2,\alpha_3\atop{K^{-2}-\text{transverse}}}(\prod_{i=1}^3|\widehat{g_{\alpha_i}d\sigma}(x)|)^{1/3}+
\end{equation}\begin{equation}
\label{term1.3}
|\sum_{\alpha\subset \pi^{-1}(S_L)\cap H^2_\upsilon}\widehat{g_{\alpha}d\sigma}(x)|.
\end{equation}
To see this, we distinguish three scenarios. First, if $c_\alpha(B_K)\le K^{-2}c_{\alpha^*}(B_K)$ for each $\alpha$ with  $\dist(\pi(\alpha),\pi(\alpha^*))\ge \frac{10}{K}$, then \eqref{term1.1} suffices, as
$$ |\widehat{gd\sigma}(x)|\le \sum_{\alpha}c_\alpha(B_K).$$
Otherwise, there is  $\alpha^{**}$ with $\dist(\pi(\alpha^{**}),\pi(\alpha^*))\ge \frac{10}{K}$ and
$c_{\alpha^{**}}(B_K)\ge K^{-2}c_{\alpha^*}(B_K)$.
The line $L$ is determined by $\alpha^*,\alpha^{**}$.

Second, if there is $\alpha^{***}$ such that  $\pi(\alpha^{***})$ intersects the complement of $S_L$ and $c_{\alpha^{***}}(B_K)\ge K^{-2}c_{\alpha^*}(B_K)$ then  \eqref{term1.2} suffices. Indeed, note that $\alpha^{*},\alpha^{**}$, $\alpha^{***}$ are $K^{-2}$ transverse.

Otherwise  the sum of \eqref{term1.1} and \eqref{term1.3} will suffice.

The only nontrivial case to address is the one corresponding to this latter scenario. Cover $\pi^{-1}(S_L)\cap H^2_\upsilon$ by pairwise disjoint strips $U$ of length $\sim \frac{1}{K^{1/2}}$. An application of the trivial $l^p$ decoupling \eqref{e12} shows that
$$\|\sum_{\alpha:\pi(\alpha)\subset S_L}\widehat{g_{\alpha}d\sigma}\|_{L^p(B_K)}\lesssim K^{\frac12-\frac1p}(\sum_{U}\|\widehat{g_{U}d\sigma}\|_{L^p(w_{B_K})}^p)^{1/p}.$$
This is the best we can say in general. Indeed, in the case of the hyperbolic paraboloid $\upsilon=(1,-1)$, if the line $L$ happens to be $\xi_2=\pm\xi_1$ then $\pi^{-1}(L)$ is itself a line. The absence of curvature prevents any non-trivial estimate to hold.

 Note however that since we are dealing with the third scenario,
$$(\sum_{U}\|\widehat{g_{U}d\sigma}\|_{L^p(B_K)}^p)^{1/p}\lesssim (\sum_{\beta:\frac{1}{K^{1/2}}-\text{ cap}:\atop{\pi(\beta)\subset S_L}}\|\widehat{g_{\beta}d\sigma}\|_{L^p(w_{B_K})}^p)^{1/p}+\|\widehat{g_{\alpha^*}d\sigma}\|_{L^p(w_{B_K})}.$$
We conclude that in either case
$$\|\widehat{gd\sigma}\|_{L^p(B_K)}\lesssim [(\sum_{\alpha\subset H^2_\upsilon\atop{\alpha:\frac{1}{K}\text{ cap}}}\|\widehat{g_{\alpha}d\sigma}\|_{L^p(w_{B_K})}^p)^{1/p}+K(\sum_{\beta\subset H^2_\upsilon\atop{\beta:\frac{1}{K^{1/2}}\text{ cap}}}\|\widehat{g_{\beta}d\sigma}\|_{L^p(w_{B_K})}^p)^{1/p}]+$$$$+K^{10}C_{p,3,\upsilon}(R^{-1},K^{-2})(\sum_{\Delta\subset H^2_\upsilon\atop{\Delta:\frac1{R^{1/2}}\text{ cap}}}\|\widehat{g_{\Delta}d\sigma}\|_{L^p(w_{B_K})}^p)^{1/p}.$$Finally,  raise to the $p^{th}$ power and sum over $B_K\subset B_R$. Also, the norm $\|\widehat{gd\sigma}\|_{L^{p}(B_R)}$ can be replaced by the weighted norm $\|\widehat{gd\sigma}\|_{L^{p}(w_{B_R})}$ via the standard localization argument.

One may repeat this argument in the case $n\ge 4$ as follows. For each $B_K$ there exists a hyperplane $\E=\E(B_K)$ in the $(\xi_1,\ldots,\xi_{n-2})$ space such that
for $x\in B_K$
$$ |\widehat{gd\sigma}(x)|\le $$
$$C\max_{\alpha}|\widehat{g_{\alpha}d\sigma}(x)|+$$
$$K^{C_n}\max_{\alpha_1,\ldots,\alpha_n\atop{K^{-n}-\text{transverse}}}(\prod_{i=1}^n|\widehat{g_{\alpha_i}d\sigma}(x)|)^{1/n}+$$
$$|\sum_{\alpha\subset \pi^{-1}(S_\E)\cap H^{n-1}_\upsilon}\widehat{g_{\alpha}d\sigma}(x)|.$$
Here
$$S_\E=\{(\xi_1,\ldots,\xi_{n-1}): \dist((\xi_1,\ldots,\xi_{n-1}),\E)\lesssim \frac{1}{K}\}$$
We only need to explain how to accommodate the previous argument to control the last term. Cover $\pi^{-1}(S_\E)\cap H^{n-1}_\upsilon$ by pairwise disjoint strips $U$ of dimension $\sim \frac{1}{K^{1/2}}\times\ldots\times \frac1{K^{1/2}}\times \frac1K$. These strips are inside the $\frac1K$ neighborhood of the $n-2$ dimensional manifold $$S_{\E,\upsilon}=\{(\xi_1,\ldots,\xi_n)\in H^{n-1}_\upsilon:(\xi_1,\ldots,\xi_{n-1})\in \E\},$$
and correspond to a tiling of this manifold by $\frac1{K^{1/2}}$- caps. The important new observation is that $S_{\E,\upsilon}$, regarded as a hypersurface in the hyperplane
$$\{(\xi_1,\ldots,\xi_n):(\xi_1,\ldots,\xi_{n-1})\in \E\}$$
has at least $n-2$ of its $n-1$ principal curvatures bounded away from zero, at any given point. This is of course a consequence of Lemma \ref{l:121}. The case $n=3$ discussed earlier shows that one (in this case the only) principal curvature may indeed happen to be zero. More generally, consider any hyperbolic paraboloid $H^{n-1}_{\upsilon}$. Fix any $A_1,\ldots,A_{n-1}$ such that
$$\sum_{i=1}^{n-1}\upsilon_iA_i^2=0.$$Let $\E$ be the hyperplane
$$\sum_{i=1}^{n-1}\upsilon_iA_i\xi_i=0$$

It is easy to check that for each point $(\xi_1^*,\ldots,\xi_n^*)$ in the corresponding manifold $S_{\E,\upsilon}$, the (appropriate part of the) line
$$\frac{\xi_1-\xi_1^*}{A_1}=\ldots=\frac{\xi_{n-1}-\xi_{n-1}^*}{A_{n-1}}=\frac{\xi_n-\xi_n^*}{0}$$
is inside $S_{\E,\upsilon}$. In other words $S_{\E,\upsilon}$ is a cylinder, and one of its principal curvatures will be zero.

Using our hypothesis and  Lemma \ref{l7} we can write
$$\|\sum_{\alpha\subset \pi^{-1}(S_\E)\cap H^{n-1}_\upsilon}\widehat{g_{\alpha}d\sigma}\|_{L^p(B_K)}\lesssim_\epsilon K^{\frac{n-1}{2}(\frac12-\frac1p)+\epsilon}(\sum_{U}\|\widehat{g_{U}d\sigma}\|_{L^p(w_{B_K})}^p)^{1/p}.$$
As before, this can be further bounded by

$$\lesssim K^{\frac{n-1}{2}(\frac12-\frac1p)+\epsilon}[(\sum_{\beta:\frac1{K^{1/2}}-cap}\|\widehat{g_{\beta}d\sigma}\|_{L^p(w_{B_K})}^p)^{1/p}+(\sum_{\alpha:\frac1{K}-cap}\|\widehat{g_{\alpha}d\sigma}\|_{L^p(w_{B_K})}^p)^{1/p}].$$
The argument is now complete.
\end{proof}
\bigskip

Simple parabolic rescaling leads to the following more general version. The interested reader should consult the proof of the analogous result in \cite{BD3} for details.
\begin{proposition}
\label{p9}
Fix $n\ge 3$, $\upsilon\in\{-1,1\}^{n-1}$ and let $p\ge 2$. Assume one of the following holds

(i) $n=3$

(ii) $n\ge 4$ and $K_{p,n-2,\upsilon'}(\delta')\lesssim_\epsilon {\delta'}^{-\frac{n-3}{2}(\frac12-\frac1p)-\epsilon}$ for each $\delta'>0$, $\upsilon'\in\{-1,1\}^{n-3}$ and  each $\epsilon>0$.

 Then for each $\epsilon$ there exist constants $C_\epsilon$, $C_n$ such that for each $R>1$ and $K\ge 1$ and for each $\delta$-cap $\tau$ on $H^{n-1}_\upsilon$ we have
$$\|\widehat{g_\tau d\sigma}\|_{L^p(w_{B_R})}\le C_\epsilon[(\sum_{\alpha\subset \tau\atop{\alpha:\frac{\delta}{K}-\text{ cap}}}\|\widehat{g_{\alpha}d\sigma}\|_{L^p(w_{B_R)}}^p)^{1/p}+K^{\frac{n-1}{2}(\frac12-\frac1p)+\epsilon}(\sum_{\beta\subset \tau\atop{\beta:\frac{\delta}{K^{1/2}}-\text{ cap}}}\|\widehat{g_{\beta}d\sigma}\|_{L^p(w_{B_R})}^p)^{1/p}]+$$$$+K^{C_n}C_{p,n,\upsilon}((R\delta^2)^{-1},K^{-n})(\sum_{\Delta\subset \tau\atop{\Delta:\frac1{R^{1/2}}-\text{ cap}}}\|\widehat{g_{\Delta}d\sigma}\|_{L^p(w_{B_R})}^p)^{1/p}$$
\end{proposition}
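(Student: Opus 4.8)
The plan is to deduce Proposition~\ref{p9} from Proposition~\ref{hcnyf7yt75ycn8u32r8907n580-9=--qc mvntvu5n8t} by a parabolic rescaling argument, exactly as in \cite{BD3}. Since $\tau$ is a $\delta$-cap on $H^{n-1}_\upsilon$, after a translation we may assume $\tau$ is centered at the origin, so that $\pi(\tau)$ is (comparable to) a cube of side $\delta$ centered at $0$. The affine map $\xi\mapsto \delta^{-1}\xi$ on the first $n-1$ coordinates, followed by the vertical dilation $\xi_n\mapsto \delta^{-2}(\xi_n-\langle\text{linear part}\rangle)$ that removes the linear term produced by completing the square, sends $\tau$ to a full copy of $H^{n-1}_\upsilon$ (the quadratic coefficients $\upsilon$ are scale invariant under this map, which is why $\upsilon$ is unchanged). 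Call this linear map $T$; on the physical side it acts by the adjoint $(T^{-1})^{t}$, sending a ball $B_R$ to (roughly) a ball of radius $R\delta^2$ in the relevant directions, up to the standard $w_{B_R}$ weight bookkeeping. One then pushes forward $g$ to a function $\tilde g$ on $H^{n-1}_\upsilon$, so that $\widehat{g_\tau d\sigma}$ corresponds to $\widehat{\tilde g d\sigma}$ composed with an affine change of the spatial variable, with Jacobian factors that are identical on the two sides of the inequality and hence cancel.

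The key correspondences are: a $\frac{1}{K}$-cap on the rescaled $H^{n-1}_\upsilon$ pulls back to a $\frac{\delta}{K}$-cap $\alpha\subset\tau$; a $\frac{1}{K^{1/2}}$-cap pulls back to a $\frac{\delta}{K^{1/2}}$-cap $\beta\subset\tau$; and a $\frac{1}{(R\delta^2)^{1/2}}$-cap on the rescaled surface pulls back to a $\frac{1}{R^{1/2}}$-cap $\Delta\subset\tau$. Applying Proposition~\ref{hcnyf7yt75ycn8u32r8907n580-9=--qc mvntvu5n8t} to $\tilde g$ at radius $R\delta^2$ in place of $R$, and then transporting everything back via $T$, produces precisely the three terms in the statement, with the multilinear constant appearing as $C_{p,n,\upsilon}((R\delta^2)^{-1},K^{-n})$ since the multilinear estimate was invoked at the rescaled radius. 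Note that $C_{p,n,\upsilon}$ itself is rescaling-invariant in $\upsilon$ but not in $\delta$, so the radius genuinely changes to $R\delta^2$; this is the only place the shift from $R$ to $R\delta^2$ enters.

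The routine but slightly delicate point is the weight management: the rescaling does not map $w_{B_R}$ exactly to $w_{B_{R\delta^2}}$, since $T$ is anisotropic, but the usual argument (as in \cite{BD3}) is that weighted norms behave well under affine maps up to adjusting the implicit constants and the exponent-zero rapidly decaying tails, so $\|\cdot\|_{L^p(w_{B_R})}$ on one side matches $\|\cdot\|_{L^p(w_{B_{R\delta^2}})}$ on the other after the change of variables. One also uses the standard fact that the local constants $C_{p,n,\upsilon}$ and the decoupling pieces are unaffected by which particular admissible weight is chosen. Since the paper explicitly says ``simple parabolic rescaling leads to the following more general version'' and refers the reader to \cite{BD3}, I expect the main (and only real) obstacle to be purely bookkeeping: verifying that the anisotropic dilation $T$ carries caps to caps of the claimed sizes and that the weight $w_{B_R}$ transforms acceptably; there is no new analytic content beyond Proposition~\ref{hcnyf7yt75ycn8u32r8907n580-9=--qc mvntvu5n8t}.
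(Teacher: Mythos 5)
Your proposal is correct and follows exactly the route the paper indicates (``simple parabolic rescaling,'' deferring details to \cite{BD3}): apply Proposition~\ref{hcnyf7yt75ycn8u32r8907n580-9=--qc mvntvu5n8t} after the affine reparametrization that blows up $\tau$ to a unit cap, with the dual anisotropic dilation taking $B_R$ to a slab covered by $B_{R\delta^2}$ balls, so the multilinear constant appears at scale $(R\delta^2)^{-1}$ and the three cap scales $\frac{1}{K},\frac{1}{K^{1/2}},\frac{1}{(R\delta^2)^{1/2}}$ pull back to $\frac{\delta}{K},\frac{\delta}{K^{1/2}},\frac{1}{R^{1/2}}$ as claimed. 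The only point I would phrase more carefully is that the normalization of $\tau$ is not a bare translation but a Galilean shear (your ``linear part'' removal absorbs it), and the image of $B_R$ is an anisotropic box rather than a ball, which you handle correctly by covering and standard weight bookkeeping.
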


We are now ready to prove Theorem \ref{t4}. Let $K=\nu^{-1/n}$. We can certainly assume that the quantity $\delta^{\frac{n-1}{2}(\frac12-\frac1p)}C_{p,n,\upsilon}(\delta,\nu)$ is an essentially decreasing function of $\delta>0$, otherwise the results we aim to prove become trivial. This can be worked out rigorously as in \cite{BD3} using parabolic rescaling, we leave the details to the reader.
In particular, we may assume that
\begin{equation}
\label{e16}
\delta^{\frac{n-1}{2}(\frac12-\frac1p)}C_{p,n,\upsilon}(\delta,\nu)\lesssim_\epsilon R^{-\frac{n-1}{2}(\frac12-\frac1p)+\epsilon}C_{p,n,\upsilon}(R^{-1},\nu)
\end{equation}
for each $\delta>R^{-1}$ and $\epsilon>0$ .

Iterate Proposition \ref{p9} starting with caps of scale $ 1$ until all resulting caps have scale  $R^{-1/2}$.
Each iteration  lowers the scale of the caps from $\delta$ to at least $\frac{\delta}{K^{1/2}}$. When iteration is over, we end up with a sum of terms of the form
$$T_\Gamma=\Gamma K^{C_n}(\sum_{\Delta:\frac1{R^{1/2}}-\text{ cap}}\|\widehat{g_{\Delta}d\sigma}\|_{L^p(w_{B_R})}^p)^{1/p},$$
with various coefficients $\Gamma$.
Each such term  arises via $\le\log_K R$ iterations. Also, a crude estimate shows that we end up with at most $3^{\log_K R}=R^{O(\log_{\nu^{-1}}3)}$ such terms.

It remains to get a uniform upper bound on $\Gamma$. Tracing back the iteration history of $T_\Gamma$, assume it went through $m_1$ steps where scale was lowered by $K$ and $m_2$ steps where scale was lowered by $K^{1/2}$. Then obviously, for each $\epsilon$
$$\Gamma\le (C_{\epsilon})^{m_1+m_2}K^{[\frac{n-1}{2}(\frac12-\frac1p)+\epsilon]m_2}C_{p,n,\upsilon}((RK^{-m_2-2m_1})^{-1},\nu).$$
Using  the bound $m_1+m_2\le \log_K R$ this is further bounded by
$$R^{\log_{\nu^{-1}}C_\epsilon}K^{\epsilon\log_KR}C_{p,n,\upsilon}((RK^{-m_2-2m_1})^{-1},\nu)(RK^{-m_2-2m_1})^{-\frac{n-1}{2}(\frac12-\frac1p)}R^{\frac{n-1}{2}(\frac12-\frac1p)}$$

Finally, \eqref{e16} shows that
$$\Gamma\lesssim_\epsilon R^{\epsilon+\log_{\nu^{-1}}C_\epsilon}C_{p,n,\upsilon}(R^{-1},\nu).$$
The proof is now complete, by carefully letting $\epsilon$ approach zero at slower rate than $\nu$.
\bigskip

We now enter the second and final stage of the argument for Theorem \ref{t1}. For the remainder of the section we fix $\upsilon\in\{-1,1\}^{n-1}$ and $p>\frac{2(n+1)}{n-1}$.

 As in \cite{BD3}, let
$$\gamma=\liminf_{\delta\to 0}\frac{\log K_{p,n,\upsilon}(\delta)}{\log(\delta^{-1})}.$$
It follows that for each $\epsilon$
$$\delta^{-\gamma}\lesssim K_{p,n,\upsilon}(\delta)\lesssim_\epsilon \delta^{-\gamma-\epsilon}.$$
Write $\gamma=\frac{n-1}{4}-\frac{n+1}{2p}+\alpha$. For the rest of the argument we will assume that $\alpha>0$,  and will show how to reach a contradiction.

Define $$\xi=\frac{2}{(p-2)(n-1)}$$ $$\eta=\frac{n(np-2n-p-2)}{2p(n-1)^2(p-2)}.$$Since $p>\frac{2(n+1)}{n-1}$ we have that $\xi<\frac12$. A simple computation reveals that the assumption $\alpha>0$ is equivalent with
$$\gamma\frac{1-\xi}{1-2\xi}>\frac{n-1}{4}-\frac{n^2+n}{2p(n-1)}+\frac{2\eta}{1-2\xi}.$$
It follows that we can choose $s_0\in\N$  large enough and  $\epsilon_0$ small enough so that
$$\gamma(\frac{1-\xi}{1-2\xi}-\frac{\xi(2\xi)^{s_0}}{1-2\xi})>
$$
\begin{equation}
\label{mamatata}
\frac{n-1}{4}-\frac{n^2+n}{2p(n-1)}+2^{s_0}\epsilon_0+\frac{2\eta}{1-2\xi}(1-(2\xi)^{s_0})+\frac{n}{(n-1)p}(2\xi)^{s_0}.
\end{equation}

Choose $\nu>0$ small enough such that $\epsilon_0>\epsilon(\nu)$, with $\epsilon(\nu)$ as in  Theorem \ref{t4}.
Note that $s_0$, $\epsilon_0$ and $\nu$ depend only on the fixed parameters $p,n,\alpha$. As a result, we follow our convention and do not record the dependence on them when using the symbol $\lesssim$.

Throughout the rest of the section $\nu$, $s_0$ and $\epsilon_0$ will always refer to these values. To simplify notation we let $K(\delta)=\delta^{\frac{n-1}{2}(\frac12-\frac1p)}K_{p,n,\upsilon}(\delta)$ and $C(\delta)=\delta^{\frac{n-1}{2}(\frac12-\frac1p)}C_{p,n,\upsilon}(\delta,\nu)$.
Introduce the following semi-norms
$$\|f\|_{p,\delta,B}=(\sum_{\theta\in \P_\delta}\|f_\theta\|_{L^p(w_B)}^2)^{1/2},$$
$$|||f|||_{p,\delta,B}=\delta^{-\frac{n-1}{2}(\frac12-\frac1p)}(\sum_{\theta\in \P_\delta}\|f_\theta\|_{L^p(w_B)}^p)^{1/p}$$
and note that
\begin{equation}
\label{e22}
\|f\|_{p,\delta,B}\le |||f|||_{p,\delta,B}
\end{equation}

For a fixed $\theta$ consider the inequality
\begin{equation}
\label{inv999}
\|(\prod_{i=1}^n|\widehat{g_id\sigma}|)^{1/n}\|_{L^p(B_N)}
\lesssim_{\epsilon,\theta} A_\theta(N)N^{\epsilon}X(B_N)^{1-\theta}Y(B_N)^\theta,
\end{equation}
for arbitrary $\epsilon>0$, $N$, $g_i$ and $B_N$ as before.
Here
$$X(B_N)=(\prod_{i=1}^n|||\widehat{g_id\sigma}|||
_{p,\delta,B_N})^{\frac1{n}},$$
$$Y(B_N)=(\prod_{i=1}^n|||\widehat{g_id\sigma}|||
_{\frac{p(n-1)}{n},\delta,B_N})^{\frac1{n}}.$$

The following holds.
\begin{proposition}
\label{propinv5}
(a) Inequality \eqref{inv999} holds true for $\theta=1$ with $A_1(N)=N^{\frac{n-1}{4}-\frac{n^2+n}{2p(n-1)}}$.

(b) Moreover, if we assume \eqref{inv999} for some $\theta\in (0,1]$, then we also have \eqref{inv999} for $\frac{2\theta}{(p-2)(n-1)}$ with
$$A_{\frac{2\theta}{(p-2)(n-1)}}(N)= A_\theta(N^{1/2})\delta^{-\frac{\gamma}{2}(1-\frac{2\theta}{(p-2)(n-1)})}N^{\frac{n(np-2n-p-2)}{2p(n-1)^2(p-2)}\theta}.$$
\end{proposition}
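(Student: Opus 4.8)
The plan is to treat the two parts as a base case and an iteration step, following the Bourgain--Demeter bootstrapping scheme from \cite{BD3}, the novelty being that here we keep track of the mixed $L^p$--$L^{p(n-1)/n}$ norms encoded in $X$ and $Y$.

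For part (a), I would argue that \eqref{inv999} with $\theta=1$ reduces, after trivial $l^p$ decoupling in \eqref{e12} and H\"older, to an unweighted multilinear restriction estimate for $H^{n-1}_\upsilon$. The exponent $A_1(N)=N^{\frac{n-1}{4}-\frac{n^2+n}{2p(n-1)}}$ is exactly the one that matches the sharp multilinear Kakeya/restriction bound of Bennett--Carbery--Tao in the form used in \cite{BD3}: one interpolates the $L^{2n/(n-1)}$ multilinear restriction estimate (which is essentially lossless, up to $N^\epsilon$) against the trivial $L^\infty$ bound, then converts back to an $l^p$-decoupled right-hand side. The computation is that $\theta=1$ forces $Y$ to drop out (since $X^{0}Y^{1}$ is purely the lower exponent) — wait, more precisely at $\theta=1$ the right side is $Y(B_N)$, the $L^{p(n-1)/n}$ semi-norm product, and one checks the multilinear restriction inequality at the critical exponent $\frac{2n}{n-1}$ feeds exactly this. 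This is routine once the bookkeeping of $\delta$-powers in $|||\cdot|||$ versus $\|\cdot\|$ is set up, so I would only sketch it and refer to \cite{BD3}.

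For part (b), the mechanism is parabolic rescaling combined with the ball-inflation / localization trick: one passes from scale $N$ to scale $N^{1/2}$. Start from the multilinear expression on $B_N$, cover $B_N$ by balls $B_{N^{1/2}}$, and on each such ball apply the flat/lower-dimensional decoupling (Lemma \ref{l7} together with the inductive hypothesis \eqref{e42}) in the transverse directions to replace the $\frac1N$-caps by $\frac1{N^{1/2}}$-caps; this is where the factor $\delta^{-\gamma/2}(1-\tfrac{2\theta}{(p-2)(n-1)})$ and the power $N^{\eta\theta}$ with $\eta=\frac{n(np-2n-p-2)}{2p(n-1)^2(p-2)}$ come from — they are precisely the $\delta$- and $N$-costs of running Hölder between the $L^p$ and $L^{p(n-1)/n}$ norms and then reassembling via \eqref{e22}. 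Then re-apply the inductive hypothesis \eqref{inv999} at exponent $\theta$ on each $B_{N^{1/2}}$, producing $A_\theta(N^{1/2})$ and shifting $\theta$ to $\frac{2\theta}{(p-2)(n-1)}=\xi\theta$. Summing the local contributions with the trivial $l^p$ decoupling and tracking the weights $w_{B_N}$ through the localization gives \eqref{inv999} at the new value of $\theta$ with the claimed $A$.

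The main obstacle — and the step I would spend the most care on — is the middle passage from scale $N$ to $N^{1/2}$: one must verify that after parabolic rescaling the caps of scale $\delta/N^{1/2}$ inside a $\frac1{N^{1/2}}$-cap genuinely see a rescaled copy of $H^{n-1}_\upsilon$ (or one of the admissible one-small-curvature perturbations covered by Lemmas \ref{l2} and \ref{l7}), so that the inductive hypothesis \eqref{e42} applies with the stated loss $(\delta')^{-\frac{n-3}{2}(\frac12-\frac1p)-\epsilon}$, and that the geometry is uniform in $\upsilon$. Once that is in place, the exponent arithmetic collapses to exactly the stated formula for $A_{\xi\theta}(N)$; the remaining work is the now-standard weighted-norm localization, for which I would cite \cite{BD3}.
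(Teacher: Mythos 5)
Your overall plan matches the paper's in spirit, but the paper's actual proof of Proposition \ref{propinv5} is essentially a citation: it says explicitly that the argument follows line by line the proof of Proposition 6.3 in \cite{BD3}, with exactly two observations. For part (a), one takes Proposition 6.3(i) of \cite{BD3} (which bounds the multilinear quantity by a product of $\|\cdot\|_{p,\delta,B_N}$ and related $l^2$-based semi-norms) and then simply invokes \eqref{e22}, i.e.\ $\|f\|_{p,\delta,B}\le |||f|||_{p,\delta,B}$, to pass to the $l^p$-based semi-norms. For part (b), the only modification is that the H\"older step
$$\|\widehat{g_id\sigma}\|_{\frac{p(n-1)}{n},\delta,B_N}\le \|\widehat{g_id\sigma}\|_{p,\delta,B_N}^{1-\frac{2}{(p-2)(n-1)}}\|\widehat{g_id\sigma}\|_{2,\delta,B_N}^{\frac{2}{(p-2)(n-1)}}$$
used in \cite{BD3} is observed to hold verbatim with $\|\cdot\|$ replaced by $|||\cdot|||$. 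You do mention the H\"older step in passing, but you bury it under a lot of re-derivation of \cite{BD3} material, whereas in the paper it \emph{is} the proof of part (b).

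More seriously, your description of the mechanism for part (b) contains a genuine error. You invoke Lemma \ref{l7} and the inductive hypothesis \eqref{e42} as the engine that passes from scale $N$ to $N^{1/2}$. Those ingredients belong to the Bourgain--Guth decomposition in Proposition \ref{hcnyf7yt75ycn8u32r8907n580-9=--qc mvntvu5n8t} and Theorem \ref{t4}; they play no role in Proposition \ref{propinv5}. What actually drives the iteration in part (b) is the \emph{linear} decoupling constant $K_{p,n,\upsilon}$ applied after parabolic rescaling --- this is visible in the formula for $A_{\xi\theta}(N)$, which carries the factor $\delta^{-\frac{\gamma}{2}(1-\xi\theta)}$, with $\gamma$ precisely the exponent of $K_{p,n,\upsilon}(\delta)$. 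You also misstate the geometry of the scale change: the caps stay at scale $\delta^{1/2}$ throughout; it is the ball that shrinks from $B_N$ to $B_{N^{1/2}}$ (and there is a ball-inflation / localization argument to compare the two). As written, your sketch would not reproduce the exponents in $A_{\xi\theta}(N)$; the correct route is to follow \cite{BD3} Proposition 6.3(ii) verbatim and only change the single H\"older inequality to its $|||\cdot|||$ version.
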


The proof follows line by line the proof of the analogous Proposition 6.3 in \cite{BD3}. More precisely, part (i) here follows right away from Proposition 6.3 (i) by simply invoking \eqref{e22}. Also, the only modification needed to prove part (ii) is to notice that the following consequence of H\"older's inequality used in \cite{BD3}
$$\|\widehat{g_id\sigma}\|_{\frac{p(n-1)}{n},\delta,B_N}\le \|\widehat{g_id\sigma}\|_{p,\delta,B_N}^{1-\frac{2}{(p-2)(n-1)}}\|\widehat{g_id\sigma}\|_{2,\delta,B_N}^{\frac{2}{(p-2)(n-1)}}$$
continues to hold if $\|\cdot\|$ is replaced with $|||\cdot|||$.

Proposition \ref{propinv5} implies that for each $s\ge 0$
$$A_{\xi^s}(N)= N^{\psi(\xi^s)}$$
with
\begin{equation}
\label{inv13}
\psi(\xi^{s+1})= \frac12\psi(\xi^s)+\frac\gamma2(1-\xi^{s+1})+\eta\xi^s.
\end{equation}
Recall that $\xi<\frac12$. Iterating \eqref{inv13}  gives
\begin{equation}
\label{inv15}
\psi(\xi^s)=\frac1{2^s}\psi(1)+\gamma(1-2^{-s})+2(\frac\eta\xi-\frac\gamma2)\frac{2^{-s}-\xi^{s}}{\xi^{-1}-2}
\end{equation}

Note that $Y(B_N)\lesssim X(B_N)N^{\frac{n}{(n-1)p}}$. As \eqref{inv999} holds for $\theta=\xi^s$ and arbitrary $\nu$-transverse caps $\tau_i$ we get
\begin{equation}
\label{7743048765898}
C(\delta)\lesssim_{\epsilon,s} \delta^{-\epsilon}A_{\xi^s}(N)N^{\frac{n\xi^s}{(n-1)p}}.
\end{equation}
\bigskip

To finish the proof of Theorem \ref{t1} for $H^{n-1}_\upsilon$, we will argue using induction on $n$ that $\alpha=0$. As observed earlier, the case $n=2$ is covered by the main theorem in \cite{BD3}. So the first case to consider is $n=3$.
Since \eqref{7743048765898} (with $s=s_0$) holds for arbitrarily small $\delta$ and $\epsilon$, using Theorem \ref{t4} we get

\begin{equation}
\label{inv14}
\gamma-\epsilon_0\le \psi(\xi^{s_0})+\frac{n\xi^{s_0}}{(n-1)p}.
\end{equation}
Combining \eqref{inv15} and \eqref{inv14}  we find
$$\gamma(\frac{1-\xi}{1-2\xi}-\frac{\xi(2\xi)^{s_0}}{1-2\xi})\le \psi(1)+2^{s_0}\epsilon_0+\frac{2\eta}{1-2\xi}(1-(2\xi)^{s_0})+\frac{n}{(n-1)p}(2\xi)^{s_0},$$
which  contradicts \eqref{mamatata}. Thus $\alpha=0$ and Theorem \ref{t1} is proved for $n=3$ and $p>4$.

Assume now that $n\ge 4$ and that Theorem \ref{t1} was proved in all dimensions  $d\le n-1$. To prove Theorem \ref{t1} in $\R^n$ for $p>\frac{2(n+1)}{n-1}$, it suffices to prove it for $\frac{2(n+1)}{n-1}<p<\frac{2(n-1)}{n-3}$. Note that in this range we have $p<\frac{2(d+1)}{d-1}$ for $d=n-2$, in particular \eqref{e42} holds. Thus Theorem \ref{t4} is applicable due to our induction hypothesis and we reach a contradiction as in the case $n=3$ discussed above.
\bigskip

It remains to see why Theorem \ref{t1} holds for the endpoint $p=p_n=\frac{2(n+1)}{n-1}$. Via a localization argument, $K_{p,n,\upsilon}(\delta)$ is comparable to the best constant $K_{p,n,\upsilon}^{*}(\delta)$ that makes the following inequality true for each $N$-ball $B_N$ and each $f$ Fourier supported in $\A_\delta$
\begin{equation}
\label{inv50}
\|f\|_{L^p({B_N})}\le K_{p,n,\upsilon}^{*}(\delta)(\sum_{\theta\in \P_\delta}\|f_\theta\|_{L^p(\R^n)}^p)^{1/p}.
\end{equation}
It suffices now to invoke Theorem \ref{t1} for $p>\frac{2(n+1)}{n-1}$ together with
$$\|f\|_{L^{p_n}({B_N})}\lesssim \|f\|_{L^{p}({B_N})}N^{\frac{n}{p_n}-\frac{n}{p}}\;\;\;\text{(by H\"older's inequality)}$$
$$\|f_\theta\|_{L^p(\R^n)}\lesssim N^{\frac{n+1}{2p}-\frac{n+1}{2p_n}}\|f_\theta\|_{L^{p_n}(\R^n)}\;\;\;\text{(by Bernstein's inequality)},$$
and then to let $p\to p_n$.

\section{An $l^p$ decoupling for curves}
In this section we prove Theorem \ref{t2}. It is easy to see that for each $t_0\in [0,1]$, there is an affine transformation $L_{t_0}$ of $\R^n$, more precisely a rotation followed by a translation, such that
$$\begin{cases}L_{t_0}(\Phi(t_0))=\textbf{0} \\ L_{t_0}(\Phi'(t_0))\perp\langle e_2,\ldots,e_n\rangle\\ \hfill L_{t_0}(\Phi''(t_0))\perp\langle e_3,\ldots,e_n\rangle \\  \ldots\ldots\ldots\ldots\ldots\ldots\\  L_{t_0}(\Phi^{(n-1)}(t_0))\perp\langle e_n\rangle \end{cases}$$
In this new local system of coordinates, the equation of the curve near $t=0$ becomes
$$\tilde{\Phi}(t)=(C_{1,1}t+C_{1,2}t^2+\ldots+C_{1,n}t^n,C_{2,2}t^2+\ldots+C_{2,n}t^n,\ldots,C_{n,n}t^n)+O(t^{n+1},t^{n+1},\ldots,t^{n+1}).$$
The coefficients $C_{i,j}$  depend on $t_0$ but satisfy $\kappa\le |C_{i,i}|\le \kappa^{-1}$ and $|C_{i,j}|\le \kappa^{-1}$ for $i<j$, with $\kappa>0$ independent of $t_0$, due to the Wronskian condition.

By invoking a simple induction on scales argument, it suffices to prove Theorem \ref{t2} for the special curves $\Phi_{\textbf{C}}$, $\textbf{C}=(C_{i,j})_{1\le i\le j\le n}$
\begin{equation}
\label{e1}
\Phi_{\textbf{C}}(t)=(C_{1,1}t+C_{1,2}t^2+\ldots+C_{1,n}t^n,C_{2,2}t^2+\ldots+C_{2,n}t^n,\ldots,C_{n,n}t^n),
\end{equation}
with $C_{i,j}$ as above.  The estimates will of course depend only on $\kappa$. To see this, for $\delta<1$, let $K(\delta)$ be the smallest constant such that for each   $f$ with Fourier support in the neighborhood $\A_\delta$ of $\Phi$ we have
$$\|f\|_{2(n+1)}\le K(\delta)(\sum_{\theta\in \P_\delta}\|f_\theta\|_{2(n+1)}^{2(n+1)})^{\frac1{2(n+1)}}.$$

First, for each such  $f$
\begin{equation}
\label{e8}
\|f\|_{2(n+1)}\le K(\delta^{\frac{n}{n+1}})(\sum_{\tau\in \P_{\delta^{\frac{n}{n+1}}}}\|f_\tau\|_{2(n+1)}^{2(n+1)})^{\frac1{2(n+1)}}.
\end{equation}
The previous discussion shows that  the portion of $\Phi$ inside a given $\tau\in \P_{\delta^{\frac{n}{n+1}}}$ is within $\delta$ from a curve \eqref{e1}. Let $a$ be  the left endpoint of the interval of length $\delta^{\frac1{n+1}}$ corresponding to $\tau$. We will perform a simple rescaling as follows.
Consider the linear transformation
$$L_\tau(\xi_1,\ldots,\xi_n)=(\xi_1',\ldots,\xi_n')=(\frac{\xi_1-a}{\delta^{\frac1{n+1}}},\frac{\xi_2-2a\xi_1+a^2}{\delta^{\frac2{n+1}}},\frac{\xi_3-3a\xi_2+3a^2\xi_1-a^3}{\delta^{\frac3{n+1}}},\ldots).$$
It maps $\tau\cap \A_\delta$ into $\A_{\delta^{\frac{1}{n+1}}}$ and each $\theta\in\A_\delta$ with $\theta\subset \tau$ into some $\theta'\in\A_{\delta^{\frac{1}{n+1}}}$. Using this change of variables and Theorem \ref{t2} with $\delta$ replaced with $\delta^{\frac1{n+1}}$ we get
\begin{equation}
\label{e7}
\|f_\tau\|_{2(n+1)}\lesssim_\epsilon\delta^{-\frac1{2(n+1)^2}-\epsilon}(\sum_{\theta\in \P_\delta:\theta\subset\tau}\|f_\theta\|_{2(n+1)}^{2(n+1)})^{\frac1{2(n+1)}}.
\end{equation}
For each $\epsilon>0$, using \eqref{e8} and \eqref{e7} we conclude the existence of $C_\epsilon$ such that for each $\delta<1$
$$K(\delta)\le C_\epsilon\delta^{-\frac1{2(n+1)^2}-\epsilon}K(\delta^{\frac{n}{n+1}}).$$
By iteration this immediately leads to $K(\delta)\lesssim_\epsilon \delta^{-\frac{1}{2(n+1)}-\epsilon}$.

We further observe that it suffices to consider curves $\Phi_{\textbf{C}}$ with $\textbf{C}$ equal to the identity matrix. This is because the decoupling inequality is preserved under linear transformations. For the remainder of the section we let
$$\Phi(t)=(t,t^2,\ldots,t^{n}).$$

For each dyadic interval $I\subset [0,1]$ define the extension operator
$$E_If(x)=\int_If(t)e(tx_1+t^2x_2+\ldots+t^nx_n)dt.$$
Unless specified otherwise, all intervals will be implicitly assumed to be dyadic. It is easy to see that Theorem \ref{t2} is in fact equivalent with the inequality
\begin{equation}
\label{e11}
\|E_{[0,1]}f\|_{L^{2(n+1)}(B_R)}\lesssim_{\epsilon}R^{\frac1{2(n+1)}+\epsilon}(\sum_{\Delta:\frac1{R^{1/n}}-\text{interval }\atop{\Delta\subset [0,1]}}\|E_{\Delta}f\|_{L^{2(n+1)}(w_{B_R})}^{2(n+1)})^{\frac{1}{2(n+1)}}.
\end{equation}

First we prove the multilinear version of \eqref{e11}. This proposition also appears in the companion paper \cite{Bo}, we sketch the details for completeness. We note that this multilinear inequality is more efficient than \eqref{e11}, since it decouples $[0,1]$ into intervals of length $\frac1{R^{1/2}}$, smaller than  $\frac1{R^{1/n}}$. By covering balls of larger radius with balls of smaller radius, we can replace $B_R$ with $B_{R'}$, $R'\ge R$ in Proposition \ref{p2}. If we choose $R'=R^{n/2}$ and then rescale we get
\begin{equation}
\label{e10}
\|(\prod_{j=1}^{n-1}|E_{I_j}f|)^{\frac1{n-1}}\|_{L^{2(n+1)}(B_R)}\le C_{K,\epsilon}R^{\frac{1}{2(n+1)}+\epsilon}(\sum_{\Delta:\frac1{R^{1/n}}-\text{interval }\atop{\Delta\subset [0,1]}}\|E_{\Delta}f\|_{L^{2(n+1)}(w_{B_R})}^{2(n+1)})^{\frac{1}{2(n+1)}}
\end{equation}

Let $K\gg 1$ be a fixed large  constant depending only on $n$, whose value will be chosen at the end of this section.

\begin{proposition}
\label{p2}
Let $I_1,\ldots,I_{n-1}$ be intervals of the form $I_j=[\frac{a_j}{K},\frac{a_j+1}{K}]\subset [\frac1K,\frac{K-1}K]$  with $a_j\le a_{j+1}-2$. Then for each $R$ large enough, each $R$-ball $B_R$  and each $\epsilon>0$ there exists $C_{K,\epsilon}$ such that
$$\|(\prod_{j=1}^{n-1}|E_{I_j}f|)^{\frac1{n-1}}\|_{L^{2(n+1)}(B_R)}\le C_{K,\epsilon}R^{\frac{n}{4(n+1)}+\epsilon}(\sum_{\Delta:\frac1{R^{1/2}}-\text{interval }\atop{\Delta\subset [0,1]}}\|E_{\Delta}f\|_{L^{2(n+1)}(w_{B_R})}^{2(n+1)})^{\frac{1}{2(n+1)}}.$$
\end{proposition}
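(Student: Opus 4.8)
The plan is to prove this multilinear estimate by combining the sharp multilinear restriction theorem for the moment curve with the trivial $l^p$ decoupling \eqref{e12}. The key point is that the moment curve $\Phi(t)=(t,t^2,\ldots,t^n)$ is \emph{nondegenerate}: for $K^{-1}$-separated intervals $I_1,\ldots,I_{n-1}$ as in the hypothesis, any choice of tangent vectors $\Phi'(t_j)\in T_{t_j}\Phi$, together with a ``curvature'' vector, spans $\R^n$ quantitatively, with constants depending only on $K$. First I would record the multilinear Kakeya/restriction input: for functions $E_{I_j}f$ as above there is the pointwise-averaged bound
$$\int_{B_R}\prod_{j=1}^{n-1}|E_{I_j}f(x)|^{\frac{2(n+1)}{n-1}}dx\lesssim_{K,\epsilon} R^{\epsilon}\prod_{j=1}^{n-1}\Big(\int_{w_{B_R}}|E_{I_j}f|^2\Big)^{\frac{n+1}{n-1}},$$
which is a consequence of the Bennett--Carbery--Tao multilinear restriction theorem applied to the $n-1$ transverse pieces of the curve (one loses $R^\epsilon$ rather than using the endpoint version, which suffices here). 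This is exactly the step where transversality of the $I_j$ is used.

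Next I would pass from the $L^2$ quantities on the right to the decoupled $l^{2(n+1)}$ quantities at scale $R^{-1/2}$. On each interval $I_j$ of length $\sim K^{-1}$, partition into $\sim R^{1/2}$ subintervals $\Delta$ of length $R^{-1/2}$. The functions $E_\Delta f$ have Fourier support in essentially disjoint (finitely overlapping) slabs, so by $L^2$ orthogonality on $w_{B_R}$,
$$\Big(\int_{w_{B_R}}|E_{I_j}f|^2\Big)^{1/2}\lesssim\Big(\sum_{\Delta\subset I_j}\int_{w_{B_R}}|E_\Delta f|^2\Big)^{1/2},$$
and then by Hölder in the $\Delta$-sum (there are $\lesssim_K R^{1/2}$ terms) one converts the $\ell^2$ sum to an $\ell^{2(n+1)}$ sum at the cost of $R^{\frac12(\frac12-\frac1{2(n+1)})}=R^{\frac{n}{4(n+1)}}$, i.e.
$$\Big(\sum_{\Delta\subset I_j}\|E_\Delta f\|_{L^2(w_{B_R})}^2\Big)^{1/2}\lesssim_K R^{\frac{n}{4(n+1)}}\Big(\sum_{\Delta\subset I_j}\|E_\Delta f\|_{L^2(w_{B_R})}^{2(n+1)}\Big)^{\frac1{2(n+1)}}.$$
Here one also needs the reverse square-function/Bernstein comparison $\|E_\Delta f\|_{L^2(w_{B_R})}\lesssim \|E_\Delta f\|_{L^{2(n+1)}(w_{B_R})}$ up to the correct power of $R$ (locally constant at scale $R^{1/2}\gg 1$ on a ball of radius $R$); care has to be taken to track these Bernstein factors so that the final exponent of $R$ is exactly $\frac{n}{4(n+1)}+\epsilon$. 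Finally, enlarging the sum from $\Delta\subset I_j$ to all $\Delta\subset[0,1]$ and combining the $n-1$ factors via Hölder's inequality in $x$ gives the claimed bound.

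The main obstacle I anticipate is bookkeeping the powers of $R$ in the passage from the $L^2$ side of multilinear restriction to the $l^{2(n+1)}$ side: the Hölder step over $R^{1/2}$ intervals, the Bernstein step relating $L^2$ and $L^{2(n+1)}$ norms of a single $E_\Delta f$ on a ball of radius $R$, and the weight manipulations ($w_{B_R}$ versus a genuine ball, and replacing $B_R$ with $B_{R'}$) all contribute, and one must verify they conspire to give precisely the exponent $\frac{n}{4(n+1)}$ and no worse. The transversality-dependent constant $C_{K,\epsilon}$ is harmless since $K$ is ultimately fixed depending only on $n$. Since \cite{Bo} contains the full argument, I would present this as a sketch, emphasizing the nondegeneracy of the moment curve (so that the $n-1$ transverse pieces plus curvature genuinely span $\R^n$) as the structural reason the multilinear restriction theorem applies.
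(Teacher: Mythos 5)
Your proposal misses the structural fact that drives the paper's proof, and the substitute you offer would not yield the stated exponent.

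The paper's argument hinges on the observation that the pointwise product $\prod_{j=1}^{n-1}E_{I_j}f$ is itself a \emph{single} Fourier extension operator: it equals $\widehat{Fd\sigma_{S_0}}$ for an explicit density $F$ on the $(n-1)$-dimensional hypersurface $S_0\subset\R^n$ parametrized by $\xi_k=t_1^k+\cdots+t_{n-1}^k$, $t_j\in I_j$. Using the Jacobian computation around \eqref{e2}--\eqref{e3} and the mean value theorem, the paper shows $\langle D_1^{-1}D_2e_j,\1\rangle\neq0$, i.e. $S_0$ has \emph{nonzero Gaussian curvature}. One then applies Theorem~\ref{t1} to $S_0$ at exponent $p=\tfrac{2(n+1)}{n-1}$ and scale $\delta=R^{-1}$; the resulting $R^{-1/2}$-caps on $S_0$ correspond bijectively (with $K$-dependent comparability) to tuples $(\Delta_1,\ldots,\Delta_{n-1})$ of $R^{-1/2}$-intervals, and two applications of H\"older split the product into the claimed sum. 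The entire content of the Proposition is thus an instance of the paper's own main theorem, which is why the authors needed Theorem~\ref{t1} for indefinite second fundamental form in the first place (for $n=3$ the relevant $S_0$ is $\xi_3=-\tfrac{\xi_1^3}{2}+3\xi_1\xi_2$, which is hyperbolic).

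Your proposal instead invokes a Bennett--Carbery--Tao multilinear restriction estimate for $n-1$ ``transverse'' arcs of the curve in $\R^n$. That is not a valid input: $n-1$ tangent vectors in $\R^n$ cannot span, so the BCT transversality hypothesis fails, and the resulting multilinear Kakeya/restriction bound carries an unavoidable polynomial loss in $R$ in the missing direction. The curvature gain that compensates for this deficiency is exactly the Gaussian curvature of $S_0$, and it is captured not by transversality of the $I_j$ but by the second-order information in $D_2$; your sketch never uses it. The second half of your argument compounds the problem: the passage from the $L^2$ quantities to the $\ell^{2(n+1)}$ quantities via ``orthogonality $+$ H\"older $+$ Bernstein'' is the wrong direction for Bernstein, and even on the genuine support of $E_\Delta f$ (a slab of volume $\sim R^{n-1/2}$ inside $B_R$) the cost of H\"older from $L^{2(n+1)}$ to $L^2$ is a power of $R$ that by itself exceeds the target $R^{n/(4(n+1))}$. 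Since $R^{n/(4(n+1))}$ is exactly half the trivial decoupling exponent $R^{n/(2(n+1))}$, any proof must genuinely exploit the curvature of $S_0$; a crude orthogonality/Bernstein bookkeeping cannot close the gap. The correct route is the one the paper takes: identify $S_0$, verify its curvature, and feed it into Theorem~\ref{t1}.
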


\begin{proof}
The first stage of the argument is concerned with proving  that the system
\begin{equation}
\label{e4}
\begin{cases}\xi_1=t_1+\ldots+t_{n-1} \\ \xi_2=t_1^2+\ldots+t_{n-1}^2\\\dots\ldots \dots\ldots\dots\ldots\\ \xi_{n}=t_1^{n}+\ldots+t_{n-1}^n\end{cases}
\end{equation}
with $$0<t_1<t_2<\ldots<t_{n-1}\le 1$$
gives rise to a hypersurface $S:=\{\xi_1=H(\xi_2,\ldots,\xi_n)\}$ with nonzero Gaussian curvature. To see this, fix a point $\xi=(\xi_1,\ldots,\xi_n)\in S$ corresponding to $(t_1,\ldots,t_{n-1})$. Consider also a point $\xi'=(\xi_1',\ldots,\xi_n')\in S$ corresponding to $(t_1+s_1,\ldots,t_{n-1}+s_{n-1})$ with small $s_i$. Denote by $\1$ the vector in $\R^{n-1}$ with all entries equal to 1 and by $s=(s_1,\ldots,s_{n-1})$.  Then using Taylor's formula we get
\begin{equation}
\label{e2}
\left(\begin{array}{ccc}\xi_2'-\xi_2\\\xi_3'-\xi_3\\\ldots\\\xi_n'-\xi_{n}\end{array}\right)=D_1\left(\begin{array}{ccc}s_1\\s_2\\\ldots\\s_{n-1}\end{array}\right)+\frac12D_2\left(\begin{array}{ccc}s_1^2\\s_2^2\\\ldots\\s_{n-1}^2\end{array}\right)+\1 O(\|s\|^3)
\end{equation}
where
$$D_1=\begin{bmatrix} 2t_1&\ldots & 2t_{n-1}&\\3t_1^2&\ldots& 3t_{n-1}^2&\\ \ldots& \ldots&\ldots&\\nt_1^{n-1}&\ldots & n t_{n-1}^{n-1}&  \end{bmatrix}\text{ and }D_2=\begin{bmatrix} 2&\ldots & 2&\\6t_1&\ldots& 6t_{n-1}&\\ \ldots& \ldots&\ldots&\\n(n-1)t_1^{n-2}&\ldots & n(n-1) t_{n-1}^{n-2}&  \end{bmatrix}.$$
Out hypothesis guarantees that
$D_1$ is non-singular and thus we can write
\begin{equation}
\label{e3}
\xi_1'-\xi_1=\langle D_1^{-1}\left(\begin{array}{ccc}\xi_2'-\xi_2\\\ldots\\\xi_n'-\xi_{n}\end{array}\right),\1\rangle-\frac12\sum_{j=1}^{n-1}s_j^2\langle D_1^{-1}D_2e_j,\1\rangle+ O(\|s\|^3).
\end{equation}
With
$$\left(\begin{array}{ccc}\eta_1\\\ldots\\\eta_{n-1}\end{array}\right)=D_1^{-1}\left(\begin{array}{ccc}\xi_2'-\xi_2\\\ldots\\\xi_n'-\xi_{n}\end{array}\right)$$
equation \eqref{e2} implies that
$$s_i=\eta_i+O(\|s\|^2).$$
This in turn shows that $\eta_i=O(\|s\|)$ and thus
$$s_i^2=\eta_i^2+O(\|s\|^3).$$
We conclude using \eqref{e3} that
$$\xi_1'-\xi_1=\sum_{j=1}^{n-1}\eta_j-\frac12\sum_{j=1}^{n-1}\eta_j^2\langle D_1^{-1}D_2e_j,\1\rangle+ O(\|\eta\|^3).$$
It remains to prove that $\langle D_1^{-1}D_2e_j,\1\rangle\not =0$. This follows from a standard application of the mean value theorem from Calculus, see \cite{Bo} for details.  The  first stage of the argument is now complete.

Next we let $S_0$ be the part of the hypersurface \eqref{e4} corresponding to $t_j\in I_j$. The entries of $D_1^{-1}$ are $O_K(1)$ and so the volume element $dV=\rho(t_1,\ldots,t_{n-1})dt_1\ldots dt_{n-1}$ on $S_0$ satisfies
\begin{equation}
\label{e5}
1\le \rho\lesssim_K 1.
\end{equation}
 Thus
$$\prod_{j=1}^{n-1}E_{I_j}f(x)=\int_{S_0}F(\xi)e(\xi\cdot x)d\sigma(\xi)$$
where, if $\xi=(\xi_1,\ldots,\xi_{n})\in S_0$ corresponds to the parameters $(t_1,\ldots,t_{n-1})$ then
$$F(\xi_1,\ldots,\xi_{n})=\frac{\prod_{i=1}^{n-1}f(t_i)}{\rho(t_1,\ldots,t_{n-1})}.$$
Due to \eqref{e5}, for each  intervals $\Delta_j\subset I_j$ of length $R^{-1/2}$ the cap on $S_0$ corresponding to $t_j\in \Delta_j$ has diameter $\lesssim_K R^{-1/2}$. Also, since the entries of $D_1^{-1}$ are $O_K(1)$, the caps corresponding to distinct choices of $(\Delta_1,\ldots,\Delta_{n-1})$ are separated by $\gtrsim_K R^{-1/2}$.

We can now invoke Theorem \ref{t1} followed by H\"older's inequality to get
$$\|(\prod_{j=1}^{n-1}|E_{I_j}f|)^{\frac1{n-1}}\|_{L^{2(n+1)}(B_R)}=\|\widehat{Fd\sigma_{S_0}}\|_{L^{\frac{2(n+1)}{n-1}}(B_R)}^{\frac{1}{n-1}}\lesssim_{\epsilon,K} $$

$$R^{\frac14-\frac{n-1}{4(n+1)}+\epsilon}(\sum_{\Delta_{j_1}\subset I_1\ldots \Delta_{j_{n-1}}\subset I_{n-1}}\|\prod_{i=1}^{n-1}E_{\Delta_{j_i}}f\|_{L^{\frac{2(n+1)}{n-1}}(w_{B_R})}^{\frac{2(n+1)}{n-1}})^{\frac1{2(n+1)}}\le$$

$$R^{\frac14-\frac{n-1}{4(n+1)}+\epsilon}(\prod_{i=1}^{n-1}\sum_{\Delta_{j_i}\subset I_i}\|E_{\Delta_{j_i}}f\|_{L^{2(n+1)}(w_{B_R})}^{\frac{2(n+1)}{n-1}})^{\frac1{2(n+1)}}\le $$

$$R^{\frac{n}{4(n+1)}+\epsilon}\prod_{i=1}^{n-1}(\sum_{\Delta_{j_i}\subset I_i}\|E_{\Delta}f\|_{L^{2(n+1)}(w_{B_R})}^{2(n+1)})^{\frac{1}{2(n+1)(n-1)}}\le $$$$R^{\frac{n}{4(n+1)}+\epsilon}(\sum_{\Delta:\frac1{R^{1/2}}-\text{interval }\atop{\Delta\subset [0,1]}}\|E_{\Delta}f\|_{L^{2(n+1)}(w_{B_R})}^{2(n+1)})^{\frac{1}{2(n+1)}}.$$
\end{proof}
It is easy to see that when $n=3$, the surface $S$ parametrized by \eqref{e4} is $$\xi_3=-\frac{\xi_1^3}{2}+3\xi_1\xi_2.$$ The principal curvatures are $-\frac32(\xi_1\pm\sqrt{\xi_1^2+1})$ and the second fundamental form of $S$ is not definite. This example explains the need for our main Theorem \ref{t1} in this paper. 
\bigskip

Next we employ the Bourgain-Guth induction on scales \cite{BG} to bound the linear operator by multilinear ones.
\begin{proposition}
\label{p3}
For each $\epsilon>0$, for each $R$ large enough and each $R$-ball $B_R$ we have
$$\|E_{[0,1]}f\|_{L^{2(n+1)}(B_R)}\le C_{K,\epsilon}R^{\frac{1}{2(n+1)}+\epsilon}(\sum_{\Delta:\frac1{R^{1/n}}-\text{interval }\atop{\Delta\subset [0,1]}}\|E_{\Delta}f\|_{L^{2(n+1)}(w_{B_R})}^{2(n+1)})^{\frac{1}{2(n+1)}}+$$
$$+C(\sum_{I:\frac1K-\text{interval}}\|E_{I}f\|_{L^{2(n+1)}(B_R)}^{2(n+1)})^{\frac{1}{2(n+1)}}.$$
The constant $C$ is independent of $K,\epsilon$, it only depends on $n$.
\end{proposition}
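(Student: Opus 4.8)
The plan is to run the Bourgain--Guth dichotomy of \cite{BG} on balls of radius $K$, feeding Proposition \ref{p2} (in the rescaled form \eqref{e10}) in as the multilinear input, in close analogy with the proof of Proposition \ref{hcnyf7yt75ycn8u32r8907n580-9=--qc mvntvu5n8t}. First I would dispose of the two extreme $\frac1K$-intervals: writing $[0,1]=[0,\frac1K]\cup[\frac1K,\frac{K-1}K]\cup[\frac{K-1}K,1]$, the functions $E_{[0,\frac1K]}f$ and $E_{[\frac{K-1}K,1]}f$ are directly absorbed into the last term of the proposition, so it remains to bound $E_{[\frac1K,\frac{K-1}K]}f$, all of whose $\frac1K$-subintervals $I=[\frac aK,\frac{a+1}K]$ satisfy $1\le a\le K-2$. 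I would then fix a finitely overlapping cover of $B_R$ by balls $B_K$ of radius $K$. On a fixed $B_K$ every $E_If$ with $|I|=\frac1K$ has Fourier support in a ball of radius $\sim\frac1K$, so by the usual locally constant / uncertainty-principle reduction (as in \cite{BG}) we may treat $|E_If|$ as essentially constant on $B_K$ with value $c_I(B_K)$, up to rapidly decaying tails and a negligible exceptional subset; set $M=\max_I c_I(B_K)$.

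The core step is the pointwise dichotomy on $B_K$: I would establish that for $x\in B_K$, outside the exceptional set,
$$|E_{[\frac1K,\frac{K-1}K]}f(x)|\lesssim \max_{I:\;\frac1K\text{-interval}}|E_If(x)|+K^{C_n}\max_{I_1,\ldots,I_{n-1}\atop \text{transverse}}\Big(\prod_{j=1}^{n-1}|E_{I_j}f(x)|\Big)^{1/(n-1)},$$
where the inner maximum runs over $(n-1)$-tuples $I_1,\ldots,I_{n-1}$ admissible in Proposition \ref{p2}, i.e.\ $\frac1K$-grid intervals in $[\frac1K,\frac{K-1}K]$ with consecutive gaps $a_{j+1}\ge a_j+2$. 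To see this, let $\mathcal S$ be the set of $\frac1K$-intervals $I\subset[\frac1K,\frac{K-1}K]$ with $c_I(B_K)\ge K^{-10}M$, so that $\sum_{I\notin\mathcal S}c_I(B_K)\le K^{-9}M$ and hence $|E_{[\frac1K,\frac{K-1}K]}f(x)|\lesssim\sum_{I\in\mathcal S}c_I(B_K)$. Greedily extract a maximal subfamily of $\mathcal S$ whose members are pairwise at least one grid cell apart. If it has at least $n-1$ members, keep $n-1$ of them: they are precisely of the form admissible in Proposition \ref{p2}, and since each has $c_{I_j}(B_K)\ge K^{-10}M$ we get $M\le K^{10}\big(\prod_j c_{I_j}(B_K)\big)^{1/(n-1)}\lesssim K^{10}\big(\prod_j|E_{I_j}f(x)|\big)^{1/(n-1)}$, which together with $|E_{[\frac1K,\frac{K-1}K]}f(x)|\lesssim KM$ yields the multilinear term. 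Otherwise the subfamily has $\le n-2$ members, so by maximality $\mathcal S$ is covered by $\le n-2$ intervals of length $\le\frac3K$, hence by a set $\mathcal G$ of at most $3(n-2)$ grid intervals; then $|E_{[\frac1K,\frac{K-1}K]}f(x)|\lesssim\sum_{I\in\mathcal G}c_I(B_K)\lesssim M\lesssim \max_I|E_If(x)|$ with a constant depending only on $n$.

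To conclude I would take $L^{2(n+1)}(B_K)$ norms in the pointwise bound, estimate each maximum by the corresponding sum of $2(n+1)$-th powers, and sum those $2(n+1)$-th powers over the cover $\{B_K\}$; this reconstitutes $\|E_{[0,1]}f\|_{L^{2(n+1)}(B_R)}^{2(n+1)}$ on the left (after reinserting the two endpoint cells) and, on the right, $\sum_{I:\frac1K\text{-interval}}\|E_If\|_{L^{2(n+1)}(B_R)}^{2(n+1)}$ together with a sum over the $\lesssim K^{n-1}$ admissible tuples of $\|(\prod_j|E_{I_j}f|)^{1/(n-1)}\|_{L^{2(n+1)}(B_R)}^{2(n+1)}$. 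Applying \eqref{e10} to each multilinear term, taking $2(n+1)$-th roots, renaming $\epsilon$ and collecting the $K,\epsilon$-dependent factors into a single constant $C_{K,\epsilon}$ gives the asserted inequality; the constant in front of the $\frac1K$-interval term comes only from the triangle inequalities above (with a number of terms depending only on $n$) and from splitting off the two extreme cells, so it is independent of $K$ and $\epsilon$, as required. The step demanding the most care is the dichotomy of the previous paragraph: matching the extracted tuple exactly to the hypotheses of Proposition \ref{p2} — which is what forces the separate treatment of the extreme cells and the restriction $1\le a_j\le K-2$ — and making the locally constant reductions precise so that the pointwise inequality survives integration. All of the curvature input, namely the nonvanishing Gaussian curvature of the surface \eqref{e4}, has already been absorbed into Proposition \ref{p2} via Theorem \ref{t1}.
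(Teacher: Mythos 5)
Your proposal is correct and runs the same broad--narrow dichotomy as the paper, with Proposition \ref{p2} (through \eqref{e10}) supplying the broad term. The one real difference is that you route the dichotomy through a cover of $B_R$ by $K$-balls and the locally constant heuristic, selecting a threshold family $\mathcal S$ that depends on $B_K$; the paper instead runs the identical dichotomy at a fixed point $x$: with $M(x)=\max_I|E_If(x)|$ and $\mathcal S(x)=\{I\subset[\tfrac1K,\tfrac{K-1}K]:|E_If(x)|\ge K^{-c}M(x)\}$, either $\mathcal S(x)$ contains an admissible separated $(n-1)$-tuple (broad case: $\sum_I|E_If(x)|\le KM(x)\le K^{O(1)}\max_{\text{transverse}}(\prod_j|E_{I_j}f(x)|)^{1/(n-1)}$) or by maximality $\mathcal S(x)$ is covered by $O_n(1)$ grid cells (narrow case: total $\lesssim_n M(x)$, with the intervals outside $\mathcal S(x)$ and the two endpoint cells absorbed there too, since they contribute $\lesssim M(x)$). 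Doing it pointwise avoids the rapidly decaying tails and exceptional sets that your $B_K$/locally-constant reduction forces you to track; that heavier machinery is genuinely needed in Proposition \ref{hcnyf7yt75ycn8u32r8907n580-9=--qc mvntvu5n8t}, where the hyperplane $\E(B_K)$ must be chosen per ball, but here it is superfluous. Your greedy extraction of a separated subfamily and the covering bound when fewer than $n-1$ cells survive are precisely the content the paper compresses into the phrase ``rather immediate,'' and your observation that $C$ picks up only a factor $O_n(1)$ from this case analysis is correct; the explicit peeling-off of $[0,\tfrac1K]$ and $[\tfrac{K-1}K,1]$ is fine but unnecessary, as those cells land harmlessly in the $\max_I$ term.
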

\begin{proof}
We start by writing
$$E_{[0,1]}f=\sum_{I:\frac1K-\text{interval }}E_If.$$
We call the collection of $\frac1K$-intervals $I_1,\ldots, I_{n-1}$ transverse if they satisfy the requirement in the statement of Proposition \ref{p2}. It is rather immediate that for each $x\in B_R$
$$|E_{[0,1]}f(x)|\le C\max_{I:\frac1K-\text{interval}}|E_If(x)|+C_K\sum_{I_1,\ldots,I_{n-1}:\text{transverse}}(\prod_{j=1}^{n-1}|E_{I_j}f(x)|)^{\frac1{n-1}}\le$$
$$C(\sum_{I:\frac1K-\text{interval}}|E_If(x)|^{2(n+1)})^{\frac1{2(n+1)}}+C_K\sum_{I_1,\ldots,I_{n-1}:\text{transverse}}(\prod_{j=1}^{n-1}|E_{I_j}f(x)|)^{\frac1{n-1}}.$$
The result now follows by integrating the $2(n+1)$ power and by using \eqref{e10}.
\end{proof}
We now rescale to get the following version.
\begin{proposition}
\label{p4}
Let $I$ be a $\delta$- interval in $[0,1]$.
For each $\epsilon>0$, for each $R\gtrsim \delta^{-n}$ and each $R$-ball $B_R$ we have
$$\|E_{I}f\|_{L^{2(n+1)}(B_R)}\le C_{K,\epsilon}(R\delta^n)^{\frac{1}{2(n+1)}+\epsilon}(\sum_{\Delta:\frac1{R^{1/n}}-\text{interval }\atop{\Delta\subset I}}\|E_{\Delta}f\|_{L^{2(n+1)}(w_{B_R})}^{2(n+1)})^{\frac{1}{2(n+1)}}+$$
$$+C_n(\sum_{J:\frac\delta K-\text{interval}\atop{J\subset I}}\|E_{J}f\|_{L^{2(n+1)}(B_R)}^{2(n+1)})^{\frac{1}{2(n+1)}}.$$
The constants $C_{K,\epsilon},C_n$ do not depend on $\delta$ and $C_n$ is independent of $K,\epsilon$ (it only depends on $n$).
\end{proposition}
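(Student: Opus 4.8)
The plan is to derive Proposition \ref{p4} from Proposition \ref{p3} by the affine rescaling adapted to the moment curve, in the same spirit as the rescaling step \eqref{e7}--\eqref{e8} earlier in the section. Write $I=[a,a+\delta]$ and substitute $t=a+\delta s$. Expanding $(a+\delta s)^k$ and collecting powers of $s$ one obtains an identity
$$E_If(x)=\delta\,e(\phi(x))\,E_{[0,1]}g(Tx),\qquad g(s)=f(a+\delta s),$$
where $\phi$ is real and $T=T_{a,\delta}=D_\delta U$ with $D_\delta=\operatorname{diag}(\delta,\delta^2,\dots,\delta^n)$ and $U$ unipotent upper triangular with $O(1)$ entries; in particular $|\det T|=\delta^{n(n+1)/2}$ and the singular values of $T$ are $\sim\delta,\delta^2,\dots,\delta^n$, so $\mathcal E:=T(B_R)$ is an ellipsoid with semi-axes $\sim R\delta,\dots,R\delta^n$. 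The same identity holds with $I$ and $[0,1]$ replaced by a subinterval and its image; note that $R^{-1/n}$-subintervals $\Delta\subset I$ correspond to $(R\delta^n)^{-1/n}$-subintervals of $[0,1]$, and $\tfrac\delta K$-subintervals $J\subset I$ to $\tfrac1K$-subintervals of $[0,1]$.

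Set $R'=R\delta^n$; the hypothesis $R\gtrsim\delta^{-n}$ makes $R'$ large, so Proposition \ref{p3} is available at scale $R'$. Changing variables gives
$$\|E_If\|_{L^{2(n+1)}(B_R)}^{2(n+1)}=\delta^{2(n+1)}\,\delta^{-n(n+1)/2}\,\|E_{[0,1]}g\|_{L^{2(n+1)}(\mathcal E)}^{2(n+1)}.$$
Since $\mathcal E$ contains a ball of radius $\sim R'$, I would tile $\mathcal E$ by a finitely overlapping family of $R'$-balls $B_\alpha$, apply Proposition \ref{p3} to $g$ on each $B_\alpha$, raise the resulting inequalities to the power $2(n+1)$ and sum over $\alpha$. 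Using $(x+y)^{2(n+1)}\lesssim x^{2(n+1)}+y^{2(n+1)}$ together with the bounded overlap estimates $\sum_\alpha w_{B_\alpha}\lesssim w_{\mathcal E}$ and $\sum_\alpha\mathbf{1}_{B_\alpha}\lesssim\mathbf{1}_{\mathcal E^*}$ for a slight dilate $\mathcal E^*$ of $\mathcal E$, this yields
$$\|E_{[0,1]}g\|_{L^{2(n+1)}(\mathcal E)}^{2(n+1)}\lesssim_{K,\epsilon}(R')^{1+2(n+1)\epsilon}\sum_{\Delta'}\|E_{\Delta'}g\|_{L^{2(n+1)}(w_{\mathcal E})}^{2(n+1)}+\sum_{J'}\|E_{J'}g\|_{L^{2(n+1)}(\mathcal E^*)}^{2(n+1)}.$$

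Finally I would undo the rescaling. Applying the identity of the first paragraph to the subintervals $\Delta'$, $J'$ and changing variables back converts $\|E_{\Delta'}g\|_{L^{2(n+1)}(w_{\mathcal E})}^{2(n+1)}$ into $\delta^{n(n+1)/2-2(n+1)}\|E_\Delta f\|_{L^{2(n+1)}(w_{B_R})}^{2(n+1)}$ — here one checks that $w_{\mathcal E}\circ T\lesssim w_{B_R}$, which holds because the smallest singular value of $T$ is $\sim\delta^n$ and $R'=R\delta^n$ — and likewise turns $\|E_{J'}g\|_{L^{2(n+1)}(\mathcal E^*)}^{2(n+1)}$ into $\delta^{n(n+1)/2-2(n+1)}\|E_J f\|_{L^{2(n+1)}(B_{CR})}^{2(n+1)}$, which is absorbed into $\|E_J f\|_{L^{2(n+1)}(B_R)}^{2(n+1)}$ using that $E_J f$ is locally constant at the scale $\delta/K\ll R$ (its Fourier support has diameter $\lesssim\delta/K$). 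Substituting back, the powers of $\delta$ cancel, and taking $2(n+1)$-th roots produces exactly the claimed inequality with gain $(R\delta^n)^{\frac1{2(n+1)}+\epsilon}$; the constant $C_n$ multiplying the second term comes only from the overlap constant in the tiling and is independent of $K$ and $\epsilon$.

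The main point requiring care is that the affine image of the ball $B_R$ is the anisotropic ellipsoid $\mathcal E$ (of dimensions $R\delta\times\cdots\times R\delta^n$) rather than a ball of radius $R'=R\delta^n$, so one must verify that tiling $\mathcal E$ by $R'$-balls and resumming loses nothing. This works precisely because Proposition \ref{p3} is already phrased in $\ell^{2(n+1)}$-decoupled form on both sides, so the number of tiles never enters — only the bounded overlap of the weights $w_{B_\alpha}$ is used — and because the matching $R'=R\delta^n$ makes the tails of $w_{\mathcal E}$ pull back under $T$ to a weight $\lesssim w_{B_R}$. Everything else (the explicit form of $T$, the Jacobian, and the exponent bookkeeping) is routine, and parallels the analogous rescaling carried out in \cite{BD3}.
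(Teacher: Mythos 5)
Your proposal reproduces the paper's argument essentially step for step: the affine substitution $t=a+\delta s$ producing the upper-triangular $T=D_\delta U$ with $|\det T|=\delta^{n(n+1)/2}$, the identification of $T(B_R)$ as an anisotropic box of dimensions $\sim R\delta\times\cdots\times R\delta^n$, the tiling by $R'$-balls with $R'=R\delta^n$ and the application of Proposition~\ref{p3} at scale $R'$ on each tile, followed by $\ell^{2(n+1)}$-resummation over the tiles and an inverse change of variables. The bookkeeping of the $\Delta'$ and $J'$ intervals and the exponent arithmetic all match.

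One small caveat: your justification for replacing $\|E_Jf\|_{L^{2(n+1)}(B_{CR})}$ by $\|E_Jf\|_{L^{2(n+1)}(B_R)}$ at the end is not correct as stated. The Fourier support of $E_Jf$ being confined to a set of diameter $\lesssim\delta/K$ makes $E_Jf$ ``locally constant'' at the dual scale $\sim K/\delta$, which is far below $R$ and so gives no comparison between $L^p$ norms on concentric balls of comparable radii $R$ and $CR$; in fact $\|E_Jf\|_{L^p(B_{CR})}\ge\|E_Jf\|_{L^p(B_R)}$ always. The natural output of the reverse change of variables is a weighted norm $\|E_Jf\|_{L^{2(n+1)}(w_{B_R})}$ (or a norm over a fixed dilate $B_{CR}$), and the clean way to make the iteration in the proof of \eqref{e11} go through is to carry the weight $w_{B_R}$ in the $J$-term throughout, using that a finitely overlapping cover of $B_R$ by smaller balls $B_r$ satisfies $\sum w_{B_r}\lesssim w_{B_R}$. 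The paper's own proof has the same imprecision (its reverse substitution also produces $w_{C_R}$, not the unweighted $B_R$ of the statement), so this is a shared gloss rather than a defect specific to your write-up; just be aware that the ``locally constant at scale $\delta/K$'' reasoning is not the right fix.
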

\begin{proof}
Note that if $I=[a,a+\delta]$ then the change of variables $s=\frac{t-a}{\delta}$ shows that
$$|E_If(x)|=\delta|E_{[0,1]}f^{a,\delta}(x')|$$
where $f^{a,\delta}(s)=f(s\delta+a)$ and $x'=(x_1',\ldots,x_n')$ with
$$\begin{cases}x_1'=\delta(x_1+2ax_2+3a^2x_3+\ldots)\\ x_2'=\delta^2(x_2+3ax_3+\ldots)\\\ldots\ldots\ldots\\x_n'=\delta^nx_n
\end{cases}.$$
In particular
$$\|E_If\|_{L^{2(n+1)}(B_R)}=\delta^{1-\frac{n}4}\|E_{[0,1]}f^{a,\delta}\|_{L^{2(n+1)}(C_R)}$$
where $C_R$ is a $\sim \delta R\times \delta^2 R\times \ldots \times \delta^n R$- cylinder. Cover $C_R$ by $O(1)$-overlapping $\delta^nR$- balls $B_{\delta^nR}$ and write using Proposition \ref{p3}
$$\|E_If\|_{L^{2(n+1)}(B_R)}\lesssim \delta^{1-\frac{n}4}(\sum_{B_{\delta^nR}}\|E_{[0,1]}f^{a,\delta}\|_{L^{2(n+1)}(B_{\delta^nR})}^{2(n+1)})^{\frac1{2(n+1)}}\le$$

$$C_{K,\epsilon}(\delta^nR)^{\frac{1}{2(n+1)}+\epsilon}\delta^{1-\frac{n}4}(\sum_{B_{\delta^nR}}\sum_{\Delta:\frac1{\delta R^{1/n}}-\text{interval }\atop{\Delta\subset [0,1]}}\|E_{\Delta}f^{a,\delta}\|_{L^{2(n+1)}(w_{B_{\delta^n R}})}^{2(n+1)})^{\frac{1}{2(n+1)}}+$$

$$+C\delta^{1-\frac{n}4}(\sum_{B_{\delta^nR}}\sum_{H:\frac1K-\text{interval}}\|E_{H}f^{a,\delta}\|_{L^{2(n+1)}(B_{\delta^n R})}^{2(n+1)})^{\frac{1}{2(n+1)}}\lesssim$$
$$C_{K,\epsilon}(\delta^nR)^{\frac{1}{2(n+1)}+\epsilon}\delta^{1-\frac{n}4}(\sum_{\Delta:\frac1{\delta R^{1/n}}-\text{interval }\atop{\Delta\subset [0,1]}}\|E_{\Delta}f^{a,\delta}\|_{L^{2(n+1)}(w_{C_R})}^{2(n+1)})^{\frac{1}{2(n+1)}}+$$

$$+C\delta^{1-\frac{n}4}(\sum_{H:\frac1K-\text{interval}}\|E_{H}f^{a,\delta}\|_{L^{2(n+1)}(w_{C_{R}})}^{2(n+1)})^{\frac{1}{2(n+1)}}.$$
Changing back to the original variables gives us the desired estimate.
\end{proof}
\bigskip

We are now ready to prove \eqref{e11}. Choose $K$ large enough so that $2C_n\le K^n$, where $C_n$ is the constant in Proposition \ref{p4}. Iterate Proposition \ref{p4} starting with scale $\delta=1$ until we reach scale $\delta=R^{-1/n}$.
Each iteration  lowers the scale of the intervals from $\delta$ to  $\frac{\delta}{K}$. We thus have to iterate $\frac{\log_K R}{n}$ times. In particular
$$\|E_{[0,1]}f\|_{L^{2(n+1)}(B_R)}\lesssim\sum_{s=0}^{\frac{\log_K R}{n}} C_{K,\epsilon}(C_n)^s(RK^{-ns})^{\frac{1}{2(n+1)}+\epsilon}(\sum_{\Delta:\frac1{R^{1/n}}-\text{interval }\atop{\Delta\subset [0,1]}}\|E_{\Delta}f\|_{L^{2(n+1)}(w_{B_R})}^{2(n+1)})^{\frac{1}{2(n+1)}}$$
$$\le C_{K,\epsilon}R^{\frac1{2(n+1)}+\epsilon}(\sum_{\Delta:\frac1{R^{1/n}}-\text{interval }\atop{\Delta\subset [0,1]}}\|E_{\Delta}f\|_{L^{2(n+1)}(w_{B_R})}^{2(n+1)})^{\frac{1}{2(n+1)}}.$$

\end{document}